\newtheorem{lemma}{Lemma}[section]
\newtheorem{theorem}[lemma]{Theorem}
\newtheorem*{theorem*}{Theorem}
\newtheorem{corollary}[lemma]{Corollary}
\newtheorem{question}{Question}
\newtheorem{proposition}[lemma]{Proposition}
\newtheorem*{proposition*}{Proposition}
\newtheorem*{problem*}{Problem}
\newtheorem*{theoremA}{Khintchine Recurrence Theorem~(\cite{Kh34})}
\theoremstyle{definition}
\newtheorem*{claim*}{Claim}
\newtheorem*{definition}{Definition}
\newtheorem*{example}{Example}
\newtheorem*{remark}{Remark}
\newtheorem*{remarks}{Remarks}
\newcommand{\N}{{\mathbb N}}
\newcommand{\R}{{\mathbb R}}
\newcommand{\T}{{\mathbb T}}
\newcommand{\Z}{{\mathbb Z}}
\newcommand{\B}{{\mathcal B}}
\newcommand{\X}{{\mathcal X}}
\newcommand{\Y}{{\mathcal Y}}
\newcommand{\ve}{\varepsilon}
\newcommand{\norm}[1]{\left\Vert #1\right\Vert}
\begin{document}

\title{Under recurrence in the  Khintchine recurrence theorem}

\author{Michael Boshernitzan}
\address[Michael Boshernitzan]{
 Department of Mathematics, Rice University, Houston,  TX, 77005, USA}
\email{michael@rice.edu}

\author{Nikos Frantzikinakis}
\address[Nikos Frantzikinakis]{University of Crete, Department of Mathematics, Voutes University Campus, Heraklion 71003, Greece} \email{frantzikinakis@gmail.com}
\author{Mate Wierdl}
\address[M\'at\'e Wierdl]{
  University of Memphis, Department of Mathematics,
  Memphis  TN,  38152, USA } \email{wierdlmate@gmail.com}
\email{}

\begin{abstract}
The Khintchine recurrence theorem asserts that in a measure preserving system, for every set $A$ and $\varepsilon>0$, we have $\mu(A\cap T^{-n}A)\geq \mu(A)^2-\varepsilon$ for infinitely many $n\in \mathbb{N}$. We show that there are systems having under-recurrent sets  $A$, in the sense that the inequality $\mu(A\cap T^{-n}A)< \mu(A)^2$ holds for every $n\in \mathbb{N}$. In particular,  all ergodic systems of positive entropy have under-recurrent sets. On the other hand, answering a question of V.~Bergelson, we show that not all mixing systems have under-recurrent sets.
We also study variants of these problems where the previous strict inequality is reversed, and deduce that under-recurrence is a much more rare phenomenon than over-recurrence. Finally, we study related problems pertaining to multiple recurrence and derive some interesting combinatorial consequences.
\end{abstract}

\subjclass[2010]{Primary: 37A05; Secondary: 37A25, 28D05,  05D10.}

\keywords{Khintchine recurrence, quantitative recurrence,   Lebesgue component.}

%%\date{\today}

 \maketitle

\section{Introduction and main results}
\subsection{Introduction}
One of the most classic results in ergodic theory is the  Khintchine recurrence theorem which provides a
 quantitative refinement of the celebrated recurrence theorem of Poincar\'e:
\begin{theoremA}
Let $(X,\X,\mu,T)$ be a measure preserving system and $A\in \X$ be a set. Then for every $\varepsilon>0$
we have
$$
\mu(A\cap T^{-n}A)\geq \mu(A)^2-\varepsilon
$$ for infinitely many  $n\in \N$.
\end{theoremA}
By considering mixing systems it is easy to see that   the lower bound $\mu(A)^2$ cannot be in general
improved. It is less clear whether the $\ve$ that appears on the right hand side of Khintchine's estimate is
a necessity or can be removed. This raises the following question:
\begin{question}  Is there a measure preserving system $(X,\X,\mu,T)$ and a set $A\in \X$ such that
$
\mu(A\cap T^{-n}A)<\mu(A)^2
$ holds for every $n\in \N$ ? Can we take this system to be mixing?
\end{question}
We show that the answer to both questions is affirmative (see
 Theorem~\ref{T:S1}). Moreover,
 we construct  examples that answer affirmatively analogous questions pertaining to multiple recurrence
 (see Theorem~\ref{T:S3}).

 Another natural question,  first raised
by V.~Bergelson    in \cite[Problem 1]{Be96},  is whether such constructions can be carried out on every mixing system:
\begin{question} Is it true that for every mixing
 measure preserving system $(X,\X,\mu,T)$ there exists a set $A\in \X$ such that
$\mu(A\cap T^{-n}A)\leq\mu(A)^2$ holds for every $n\in \N$ ?
\end{question}
Rather surprisingly, the answer to this question is negative. In fact, we show (see the remark after
 Theorem~\ref{T:S2}) that  if a system has an under-recurrent set, then it necessarily has a Lebesgue component (these notions are defined in the next section).
Hence,
  if a system has  singular maximal
 spectral type, then for  every set  $A\in \X$ with
  $0<\mu(A)<1$
 we have
$\mu(A\cap T^{-n}A)>\mu(A)^2$ for infinitely many $n\in \N$. Systems with singular maximal
 spectral type include all  rigid systems and several (potentially all, as conjectured in \cite{Kl97})  rank one transformations. So, in a sense, systems that have under-recurrent sets are rather rare.

 Another interesting fact is that although there are examples of over-recurrent sets for which the sequence
 $\mu(A\cap T^{-n}A)-\mu(A)^2$  converges to $0$ arbitrarily slowly (see Theorem~\ref{T:S4}), for  under-recurrent sets some stringent conditions apply which force this sequence to always be    (absolutely) summable (see  the remark after Theorem~\ref{T:F1}). In a sense, sets do not like to be under-recurrent.

In the next section,  we give the precise statements of the results alluded to in  the previous discussion and
also give several  relevant  refinements and combinatorial consequences.

\section{Main results}
To facilitate our discussion we first introduce some notation.

 A {\em measure preserving system}, or simply {\em a system}, is a quadruple $(X,\X,\mu,T)$
where $(X,\X,\mu)$ is a probability space and $T\colon X\to X$ is a  measure preserving transformation.
Throughout, {\em all  functions are assumed to be real valued} and with $Tf$ we denote the composition $f\circ T$.
\begin{definition}
Let $(X,\X, \mu, T)$ be a system. We say that
\begin{enumerate}
\item The set $A\in \X$ is {\em under-recurrent} if $0<\mu(A)<1$ and
$$
\mu(A\cap T^{-n}A)\leq \mu(A)^2\quad \text{ for every } n\in \N.
$$
\item The set $A\in \X$ is {\em over-recurrent} if $0<\mu(A)<1$ and
$$
\mu(A\cap T^{-n}A)\geq \mu(A)^2\quad \text{ for every } n\in \N.
$$
\item The function $f\in L^2(\mu)$ is {\em under-recurrent} if it is non-constant and
$$
\int f \cdot T^n f \, d\mu\leq\Big(\int f\, d\mu\Big)^2  \quad  \text{ for every } n\in \N.
$$

\item The function $f\in L^2(\mu)$ is {\em over-recurrent} if it is non-constant and
$$
\int f \cdot T^n f \, d\mu\geq\Big(\int f\, d\mu\Big)^2  \quad  \text{ for every } n\in \N.
$$

\item If we have strict inequality we say that the set or the function is {\em strictly} under-recurrent and over-recurrent respectively.
\end{enumerate}
\end{definition}
It is not hard to see  using the von Neumann ergodic theorem that for every system  $(X,\X,\mu,T)$ and function $f\in L^2(\mu)$  we  have
$
\limsup_{n\to \infty} \int f \cdot T^n f \, d\mu\geq\big(\int f\, d\mu\big)^2.
$
If in addition the system is ergodic, then
$\liminf_{n\to \infty} \int f \cdot T^n f \, d\mu\leq\big(\int f\, d\mu\big)^2.
$
This explains why we use the constants $\mu(A)^2$ and  $(\int f\, d\mu)^2$  in the above definitions.  Furthermore, we note that  over-recurrent functions are not hard to come by; for instance,   on a cartesian product system for each zero mean $f\in L^2(\mu)$ the function $f\otimes f$ is over-recurrent.
\subsection{Under and over recurrent sets} We start by stating some results related to  under and over recurrent sets.

Our first result gives an affirmative answer to Question~1 in the introduction.
\begin{theorem}\label{T:S1}
There exists a mixing system that has strictly under-recurrent and  strictly over-recurrent sets.
\end{theorem}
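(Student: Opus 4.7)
The plan is to realise the required system as a mixing Gaussian shift and to obtain the two sets as threshold events of elements of its first Wiener chaos, relying on the classical Gaussian orthant identity that converts the sign of a centred-Gaussian correlation into the sign of the deviation of a joint threshold probability from the product of its marginals.

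The starting point, and the main obstacle, is to exhibit two finite positive Borel measures $\sigma_+,\sigma_-$ on the circle $\T=\R/\Z$, both symmetric under $\theta\mapsto-\theta$ and absolutely continuous, with $\hat\sigma_+(n)>0$ and $\hat\sigma_-(n)<0$ for every $n\ge1$. For $\sigma_+$ a Poisson kernel works: take $\sigma_+=P_r(\theta)\,d\theta$ with $P_r(\theta)=\sum_{n\in\Z}r^{|n|}e^{2\pi in\theta}$ and $0<r<1$, so that $\hat\sigma_+(n)=r^{|n|}>0$. For $\sigma_-$ I would take $\sigma_-=f_-(\theta)\,d\theta$, where
$$f_-(\theta)=\frac{2r}{1-r}-2\,\Re\frac{re^{2\pi i\theta}}{1-re^{2\pi i\theta}}.$$
Expanding the geometric series gives $\hat\sigma_-(0)=2r/(1-r)$ and $\hat\sigma_-(n)=-r^{|n|}$ for $n\ne0$; a short monotonicity check in $u=\cos(2\pi\theta)\in[-1,1]$ shows $f_-$ attains its minimum $0$ at $\theta=0$ and is strictly positive elsewhere, so $\sigma_-$ is a genuine positive measure. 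Both $\sigma_\pm$ are absolutely continuous with Fourier coefficients tending to $0$, so both are Rajchman.

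Next, I would form the Gaussian shift system $(X,\X,\mu_G,T)$ whose maximal spectral type is $\sigma_++\sigma_-$. Since this type is Rajchman, the shift is (strongly) mixing. The Gaussian isomorphism identifies the real first Wiener chaos with (a real form of) $L^2(\sigma_++\sigma_-)$; picking standardised generators $F_+,F_-$ whose individual spectral measures are proportional to $\sigma_+$ and $\sigma_-$ respectively yields real, centred, unit-variance Gaussians on $X$ whose autocorrelations are $\int F_\pm\cdot T^n F_\pm\,d\mu_G=\hat\sigma_\pm(n)/\hat\sigma_\pm(0)$.

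Finally I set $A_\pm:=\{F_\pm>0\}$, so $\mu_G(A_\pm)=1/2$. Sheppard's classical orthant formula for jointly centred unit-variance Gaussians gives
$$\mu_G\big(A_\pm\cap T^{-n}A_\pm\big)-\mu_G(A_\pm)^2=\frac{1}{2\pi}\arcsin\!\left(\hat\sigma_\pm(n)/\hat\sigma_\pm(0)\right),$$
so this deviation has the same strict sign as $\hat\sigma_\pm(n)$. By the choice of $\sigma_+$ and $\sigma_-$ above, it is strictly positive for $A_+$ and strictly negative for $A_-$ at every $n\ge1$, yielding a single mixing system in which $A_+$ is strictly over-recurrent and $A_-$ strictly under-recurrent, as required by Theorem~\ref{T:S1}. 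As noted, the one genuinely delicate point is the spectral construction; the shifted Poisson kernel makes it essentially a one-line verification.
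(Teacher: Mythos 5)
Your argument is correct, but it follows a genuinely different route from the paper. The paper proves Theorem~\ref{T:S1} in two ways: by an explicit combinatorial construction in a Bernoulli shift (Theorem~\ref{T:posentropy}), and by first producing bounded under/over-recurrent \emph{functions} on a system with a bounded Lebesgue component via a lacunary perturbation $f=g+\sum_k a_kT^kg$ (Theorem~\ref{T:F2}, Proposition~\ref{P:+-}) and then converting the function into a \emph{set} on a different shift system through the correspondence principle of Proposition~\ref{T:Correspondence}. You instead work in a single mixing Gaussian system and use Sheppard's arcsine formula to transfer the sign of the first-chaos correlation $\widehat{\sigma}_\pm(n)$ directly to the sign of $\mu(A_\pm\cap T^{-n}A_\pm)-\mu(A_\pm)^2$ for the threshold sets $A_\pm=\{F_\pm>0\}$; the spectral input (the Poisson kernel and its ``reflected'' companion $f_-$, which you correctly verify is nonnegative, vanishing only at $\theta=0$) is sound, and joint Gaussianity of $(F_\pm,T^nF_\pm)$ together with the Rajchman property of $\sigma_++\sigma_-$ gives mixing. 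It is worth noting that your $\sigma_-$, once normalized, has $\sum_{n\ge 1}|\widehat{\sigma}_-(n)|$ exactly equal to $1/2$, saturating the bound of Proposition~\ref{P:key}, which is consistent with Corollary~\ref{C:Leb} since your density vanishes only at one point. What each approach buys: your arcsine trick bypasses the correspondence principle entirely and produces sets in one step, and it adapts easily to prescribing the sign of $d_A(n)$ on an arbitrary partition $\N=S_+\cup S_-$ (choose a nonnegative cosine series with the prescribed signs); on the other hand, the paper's function-plus-correspondence machinery extends to multiple recurrence (Theorem~\ref{T:S3}), where no clean sign formula for multivariate Gaussian orthant probabilities is available, so the Gaussian route does not obviously generalize in that direction.
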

\begin{remark}
We give two proofs of this result. One uses an explicit construction on Bernoulli systems and implies that every positive entropy system has under and over recurrent sets (see Theorem~\ref{T:posentropy}). The other is an indirect construction that is somewhat more versatile (see Section~\ref{SS:Proof}); for a large class of systems we establish the existence of under and over recurrent  functions with values on $[0,1]$ (see Proposition~\ref{P:+-}) and we then deduce using Proposition~\ref{T:Correspondence} the existence of under and over  recurrent sets on different systems. Using this second method we can  prove
 more delicate results like the following:
 For any partition $\N=S_+\cup S_-$, there exist a mixing system and a set $A$, such that
$\mu(A\cap T^{-n}A)>\mu(A)^2$  for  every $n\in S_+$ and $\mu(A\cap T^{-n}A)<\mu(A)^2$  for  every $n\in S_-$.
\end{remark}

%%We shall  see in the next subsection  that if a system has an under-recurrent set then it %%has a Lebesgue component. We do not know if the converse holds, but we can show that if a %%system has a Lebesgue component, then it has an under-recurrent function.

V.~Bergelson   asked in \cite[Problem 1]{Be96} whether every mixing system has a strictly under-recurrent set and whether it has  a strictly over-recurrent set. We show that the answer to the first question (and thus to Question 2 in the introduction) is negative. The second question remains open; see Problem~1 in Section~\ref{SS:Problems}.
\begin{theorem}\label{T:S2}
There exists a mixing system with no under-recurrent sets.
\end{theorem}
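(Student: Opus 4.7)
The plan is to reduce the existence of an under-recurrent set to a spectral condition, deduce via a short Fourier-analytic lemma that such a set forces an absolutely continuous (Lebesgue) component in the spectral type of the system, and then exhibit a mixing system with purely singular maximal spectral type. If $A$ is under-recurrent, set $f=\one_A-\mu(A)$; this is a nonzero real-valued function in $L^2(\mu)$ with zero mean, and the hypothesis rewrites as
\[
\int f\cdot T^n f\,d\mu \;=\; \mu(A\cap T^{-n}A)-\mu(A)^2 \;\le\; 0 \qquad\text{for every }n\ge 1.
\]
Let $\sigma_f$ denote the spectral measure of $f$, i.e.\ the unique symmetric positive finite Borel measure on $\T$ satisfying $\int_\T e^{2\pi i n t}\,d\sigma_f(t)=\int f\cdot T^n f\,d\mu$ for every $n\in\Z$. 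Then $\wh{\sigma_f}(n)\le 0$ for every $n\ne 0$.

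The crucial step is the following Fourier-analytic lemma: \emph{any symmetric positive finite Borel measure $\sigma$ on $\T$ with $\wh{\sigma}(n)\le 0$ for every $n\ne 0$ is absolutely continuous with continuous density}. To prove it, expand the nonnegative quantity
\[
0\;\le\;\int_\T\Bigl|\sum_{k=0}^{N-1} e^{2\pi i k t}\Bigr|^2\,d\sigma(t)
\;=\;N\wh{\sigma}(0)+2\sum_{k=1}^{N-1}(N-k)\,\wh{\sigma}(k),
\]
and use $\wh{\sigma}(k)\le 0$ to obtain $\sum_{k=1}^{N-1}(1-k/N)|\wh{\sigma}(k)|\le \wh{\sigma}(0)/2$. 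Letting $N\to\infty$ by monotone convergence gives $\sum_{k\ge 1}|\wh{\sigma}(k)|\le \wh{\sigma}(0)/2<\infty$, so the Fourier series of $\sigma$ converges absolutely and uniformly to a continuous function $g$ with $g(t)\ge \wh{\sigma}(0)-2\sum_{k\ge 1}|\wh{\sigma}(k)|\ge 0$. By uniqueness of Fourier coefficients, $d\sigma=g(t)\,dt$.

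Applying the lemma to $\sigma_f$ shows that $\sigma_f$ is absolutely continuous with respect to Lebesgue measure on $\T$; equivalently, $f$ lies in the Lebesgue (absolutely continuous) part of the spectrum of $T$. Consequently, if the maximal spectral type of $T$ on the orthogonal complement of constants is purely singular, the only $f$ with absolutely continuous $\sigma_f$ is $f\equiv 0$, forcing $\mu(A)\in\{0,1\}$. It therefore suffices to exhibit a mixing measure preserving system with purely singular maximal spectral type. A classical choice is a Gaussian dynamical system $(X_\sigma,\B_\sigma,\mu_\sigma,T_\sigma)$ built from a symmetric probability measure $\sigma$ on $\T$: the system is mixing precisely when $\sigma$ is Rajchman, i.e.\ $\wh{\sigma}(n)\to 0$, and its maximal spectral type on the orthogonal complement of constants is equivalent to $\sum_{n\ge 1}\sigma^{*n}/n!$, which is purely singular whenever every convolution power $\sigma^{*n}$ is singular. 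Measures $\sigma$ meeting both requirements exist (going back to Menchoff and further developed in the Foias--Stratila theory) and supply the desired example; alternatively, one can invoke a mixing rank-one construction known to have singular spectrum.

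The main obstacle is the Fourier-analytic lemma: recognizing that the one-sided sign condition $\wh{\sigma}(n)\le 0$ for $n\ne 0$ on a positive finite measure is already strong enough to force absolute summability of the Fourier coefficients, and hence absolute continuity. Once this is in place, the reduction from under-recurrence to the existence of a Lebesgue spectral component is immediate and the existence of mixing systems with purely singular maximal spectral type is classical.
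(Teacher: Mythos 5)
Your proposal is correct and follows essentially the same route as the paper: the positivity of $\int|\sum_{k<N}e^{2\pi ikt}|^2\,d\sigma$ forces $\sum_{n\ge1}|\wh{\sigma}(n)|\le\wh{\sigma}(0)/2$ (the paper's Proposition~\ref{P:key}), hence $\sigma_{\one_A-\mu(A)}$ is absolutely continuous (Corollary~\ref{C:Leb} and the implication $(i)\Rightarrow(iii)$ of Theorem~\ref{T:F1}), which is incompatible with singular maximal spectral type; the paper likewise concludes by citing mixing systems with singular spectrum (mixing rank-one constructions and Gaussian systems).
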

\begin{remark} We show something stronger:  Any system with singular maximal spectral type (with respect to the Lebesgue measure on $\T$) has no under-recurrent functions  and in fact, for every non-constant
$f\in L^2(\mu)$ we have  $\int f\cdot T^nf\, d\mu>(\int f\, \mu)^2$  for infinitely many $n\in \N$ (see the remark after Theorem~\ref{T:F1}). Examples of mixing systems with singular maximal spectral type include
the Chacon map and several other rank one transformations \cite{Bo93, Kl97}  as well as certain  Gaussian systems.
\end{remark}

It is natural to inquire whether variants of Theorem~\ref{T:S1} hold that
  deal  with the concept of  multiple under-recurrence.
It is not hard to prove the following extension of the Khintchine recurrence theorem: For every system $(X,\X,\mu,T)$, set $A\in \X$, and every $\varepsilon>0$,  we have
  for every $\ell \in \N$ that
  $$
\mu(A\cap T^{-n_1}A\cap \cdots \cap T^{-n_\ell}A)\geq \mu(A)^{\ell+1}-\varepsilon
 $$
 for infinitely many distinct $n_1,\ldots, n_\ell\in \N$.
%% and for weakly mixing systems we have
%%   $$
%%  \liminf_{n_1,\ldots, n_\ell\to \infty}\mu(A\cap T^{-n_1}A\cap \cdots \cap T^{-n_\ell}A)\leq \mu(A)^{\ell+1}
%% $$
  The next result shows that the $\varepsilon$ in the above estimate cannot be removed.
\begin{theorem}\label{T:S3}
  For every $d \in \N$ there exist a multiple mixing system\footnote{A system  $(X,\X,\mu,T)$
  is multiple mixing if for every $\ell\in\N$ and $f_0, f_1, \ldots, f_\ell\in L^\infty(\mu)$ one has $\lim_{n_1,\ldots, n_\ell} \int f_0\cdot  T^{n_1}f_1\cdot \ldots \cdot T^{n_\ell}f_\ell\, d\mu=\prod_{i=1}^\ell\int f_i\, d\mu$ where  the limit is taken when  $\min\{n_1, n_2-n_1,\ldots, n_\ell-n_{\ell-1}\}\to \infty$.}
    $(X,\X,\mu,T)$ and a set $A\in \X$ with $0<\mu(A)<1$ and such that for $\ell=1,\ldots, d$ we have
  $$
  \mu(A\cap T^{-n_1}A\cap \cdots \cap T^{-n_\ell}A)<\mu(A)^{\ell+1}
  $$
  for  all distinct $n_1,\ldots, n_\ell\in \N$.

  A similar result also holds with the strict inequality reversed.
\end{theorem}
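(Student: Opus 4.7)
My plan is to realize $A$ as a positive-level set of a stationary real Gaussian process on $X$, choosing the spectral measure so that every nonzero Fourier coefficient is strictly negative. Slepian's inequality will then deliver strict multiple under-recurrence at every order $\ell\le d$, uniformly in the distinct indices $\vec n$.

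The first step is to fix a small $\eta\in(0,1)$ and set $\rho(t)=1-\eta\sum_{n\ge 1}2^{-n}\cos(2\pi nt)$, a strictly positive continuous density on $\T$ with $\hat\rho(0)=1$ and $\hat\rho(n)=-\eta\cdot 2^{-|n|}/2<0$ for every $n\ne 0$. Taking $(X,\X,\mu,T)$ to be the real Gaussian dynamical system with spectral measure $\sigma=\rho\,dt$, the absolute continuity of $\sigma$ (together with the full support of $\rho$) yields countable Lebesgue spectrum on $L^2(\mu)\ominus\C$, so by Sinai's theorem the system is Bernoulli and in particular multiple mixing. I let $g\in L^2(\mu)$ be the canonical generator (mean zero, variance one, $\int g\cdot T^n g\,d\mu=\hat\sigma(n)$), and set $A=\{x\in X:g(x)>0\}$, which has $\mu(A)=1/2$ by the symmetry of the Gaussian law.

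The key computation will be that, for any distinct $n_1,\ldots,n_\ell\in\N$ (set $n_0:=0$), the vector $\bigl(T^{n_i}g\bigr)_{0\le i\le\ell}$ is centered Gaussian with unit variances and covariance $C_{ij}=\hat\sigma(n_j-n_i)<0$ for all $i\ne j$. Positive-definiteness of $C$ is free from Gershgorin (the off-diagonal $\ell^1$-sums are bounded by $\eta<1$), so the strict form of Slepian's inequality---for fixed unit variances the orthant probability is strictly increasing in each off-diagonal covariance, as seen from Plackett's identity $\partial\P/\partial C_{ij}>0$ for nondegenerate Gaussians---will give
$$
\mu(A\cap T^{-n_1}A\cap\cdots\cap T^{-n_\ell}A)=\P\bigl(T^{n_i}g>0\text{ for every }0\le i\le\ell\bigr)<2^{-(\ell+1)}=\mu(A)^{\ell+1},
$$
since $C$ is strictly below the identity covariance in every off-diagonal entry.

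For the reversed (strict over-recurrent) statement I will apply the same construction with $\rho^+(t)=1+\eta\sum_{n\ge 1}2^{-n}\cos(2\pi nt)$, whose nonzero Fourier coefficients are all strictly positive, and run Slepian in the opposite direction. The one subtle point, and where I expect the most care to be needed, is the uniformity in $\vec n$: the strict Slepian comparison must hold simultaneously over every tuple $\vec n$ and every order $\ell\le d$. This is handled by the fact that the strict inequality $C_{ij}<0$ holds for every pair without qualification, together with a single choice of $\eta$ (for example $\eta<1/(d+1)$) that ensures positive-definiteness of $C$ across every relevant size $\ell+1\le d+1$ at once.
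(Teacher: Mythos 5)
Your construction is essentially correct, but it is a genuinely different argument from the one in the paper, and it is worth comparing the two. The paper works on a Bernoulli shift with a $\{-1,1\}$-valued function $g$ whose orbit is an orthogonal sequence of high order, builds $h=g\cdot\sum_k a_kT^kg$ with an antisymmetric coefficient sequence, expands the multiple correlations of $f=(1+h)/2$ by hand (the order-$2d$ orthogonality kills most terms and leaves $1-\sum_{i<j}a_{n_i-n_j}^2$ plus a controllable error), and only then produces a \emph{set} by passing to a symbolic system via Proposition~\ref{T:Correspondence}. You instead realize the set directly as the positive level set of a stationary Gaussian process whose spectral density has all nonzero Fourier coefficients strictly negative, and replace the multilinear computation by the strict Slepian comparison (via Plackett's identity) between your covariance matrix and the identity. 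This buys several things: no correspondence principle is needed, the set is completely explicit, and a single system and single set work for \emph{all} orders $\ell$ simultaneously rather than for $\ell\le d$ with a $d$-dependent choice of coefficients; the price is that you import nontrivial Gaussian machinery. The computation itself is sound: with $n_0=0$ and $n_1,\ldots,n_\ell\in\N$ distinct, every off-diagonal covariance is $\widehat{\sigma}(n_j-n_i)<0$, the matrix is positive definite by strict diagonal dominance (uniformly in $\ell$, so your restriction $\eta<1/(d+1)$ is not even needed), and the segment from the identity to $C$ stays in the positive definite cone, so the strict monotonicity of the orthant probability applies.

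The one step you should repair is the justification of multiple mixing. Countable Lebesgue spectrum does \emph{not} imply the Bernoulli property (there are $K$-automorphisms, hence systems with countable Lebesgue spectrum, that are not Bernoulli), and Sinai's theorem is a statement about the existence of Bernoulli \emph{factors} of positive entropy systems, not an isomorphism criterion; so the sentence ``countable Lebesgue spectrum, so by Sinai's theorem the system is Bernoulli'' is not a valid deduction. Fortunately your conclusion is still true and can be reached in either of two standard ways: (a) by Leonov's theorem, a mixing Gaussian system (equivalently, $\widehat{\sigma}(n)\to0$) is mixing of all orders, which is exactly the multiple mixing required by the statement; or (b) since your density satisfies $\rho\geq 1-\eta>0$, hence $\log\rho\in L^1(\T)$, the process admits a one-sided moving-average representation over an i.i.d.\ Gaussian innovation sequence generating the same $\sigma$-algebra, so the system is in fact Bernoulli. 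With either fix in place the proof is complete.
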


Finally, we remark that if we do not impose any ergodicity assumptions on the system, we can prove a  variant of Theorem~\ref{T:S1}
which gives more information about the possible values of the difference
$$
d_A(n):=\mu(A\cap T^{-n}A)-\mu(A)^2.
$$
%%\begin{theorem}\label{T:S1'}
%%Let $\sigma$ be a symmetric probability measure on $\T$. Then there exists a system $(X,\B,\mu,T)$ and $A\in \X$ such %%that
%%$$
%%d_A(n)=\frac{1}{8}\cdot  \widehat{\sigma}(n) \quad \text{ for every }\  n \in \N.
%%$$
%%\end{theorem}
%%\begin{remark}
%%The construction uses  a non-ergodic system (using mixing Gaussian systems is not possible because the function we
%%get is unbounded).
%%\end{remark}
 %%Using standard Fourier analysis results we deduce:
We say that the  sequence $(a_n)_{n\in \N}$ {\em decreases convexly to $0$} if  it converges to $0$ and  satisfies $a_{n-1}+a_{n+1}-2a_n\geq0$ for every $n\geq 2$. Note that then $(a_n)_{n\in\N}$ is necessarily non-negative and  decreasing.
\begin{theorem}\label{T:S4}
 Let $(a_n)_{n\in \N}$ be a real valued sequence. If  either $\sum_{n=1}^\infty|a_n|<1/16$, or $(a_n)_{n\in \N}$ decreases convexly to $0$ and satisfies $a_1\leq 1/8$,
 then there exist a system $(X,\B,\mu,T)$ and a set $A\in \X$ such that
$d_A(n)=a_n$ for every   $n \in \N.$
\end{theorem}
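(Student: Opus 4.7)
The plan has two stages. First I would construct, for the given sequence $(a_n)$, a measure preserving system and a $[0,1]$-valued function $f$ on it such that
$$
\int f\cdot T^n f\,d\mu-\Bigl(\int f\,d\mu\Bigr)^2=a_n\qquad(n\in\N),
$$
and then invoke Proposition~\ref{T:Correspondence} to replace $f$ by the indicator function of a set $A$ on a (possibly larger) system with $d_A(n)=a_n$.

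For the first stage I would work on the Bernoulli shift $(\{-1,1\}^{\Z},B(1/2,1/2),\text{shift})$ and seek $f$ of the form
$$
f(\omega)=\tfrac12+\sum_{k\in\Z}c_k\,\omega_k,
$$
with a real sequence $(c_k)$ satisfying $\sum_k|c_k|\le 1/2$; this bound forces $f(\omega)\in[0,1]$ pointwise. Independence of the coordinates yields $\int f\cdot T^n f\,d\mu-\tfrac14=\sum_k c_k c_{k+n}$, so the task reduces to finding $(c_k)\in\ell^1(\Z)$ with $\sum_k|c_k|\le 1/2$ and $\sum_k c_k c_{k+n}=a_n$ for every $n\ge 1$. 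Equivalently, using the symmetric convention $a_{-n}=a_n$ and choosing a free parameter $a_0$ so that
$$
\hat a(\theta):=a_0+2\sum_{n\ge 1}a_n\cos(n\theta)\ge 0,
$$
one needs a spectral factorisation $|\hat c(\theta)|^2=\hat a(\theta)$ in which the factor has Wiener-algebra norm at most $1/2$.

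In the first case ($\sum|a_n|<1/16$) I would choose $a_0$ slightly above $2\sum|a_n|$ so that $\hat a>0$ on $\T$, and then set $\hat c$ to be either $\sqrt{\hat a}$ (the symmetric square root) or the Szeg\H{o} outer factor of $\hat a$ (the one-sided factor). Writing $\hat a=a_0(1+h)$ and expanding $\sqrt{1+h}=\sum_{k\ge 0}\binom{1/2}{k}h^k$, the closed form $\sum_{k\ge 0}\bigl|\binom{1/2}{k}\bigr|x^k=2-\sqrt{1-x}$ provides an explicit Wiener-norm estimate that I would then optimise in $a_0$; switching to the outer factor gives the extra gain needed to reach the stated threshold $1/16$. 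For the convex decreasing case, the classical Fej\'er identity
$$
\sum_{n\ge 1}a_n\cos(n\theta)=\tfrac12\sum_{m\ge 0}(m+1)(\Delta^2 a_m)K_m(\theta)
$$
(with $K_m$ the Fej\'er kernel and $\Delta^2 a_m\ge 0$) makes $\hat a\ge 0$ automatic and pins down its total mass by a linear expression in $a_1$ alone, which accounts for the (better) threshold $1/8$.

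The main obstacle throughout is this sharp quantitative control of the Wiener-algebra norm of the spectral factor in terms of $\sum|a_n|$ (respectively $a_1$), squeezed inside the hard budget $1/2$ imposed by $f\in[0,1]$; once that bound is secured, the Bernoulli calculation is routine and Proposition~\ref{T:Correspondence} closes the argument.
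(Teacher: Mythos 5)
Your second stage (invoking Proposition~\ref{T:Correspondence}) is exactly what the paper does, but your first stage reduces the problem to a quantitative spectral factorization that you never carry out, and your own estimates do not reach the stated constants. Writing $f=\frac12+\sum_k c_k\omega_k$ on the Bernoulli shift forces the budget $\sum_k|c_k|\le\frac12$, and you must produce $\hat c$ with $|\hat c|^2=\hat a$ within it. For the symmetric square root, the bound $\sum_{k\ge0}\bigl|\binom{1/2}{k}\bigr|x^k=2-\sqrt{1-x}$ gives $\|\hat c\|_{A(\T)}\le\sqrt{2s/u}\,\bigl(2-\sqrt{1-u}\bigr)$ with $s=\sum_{n\ge1}|a_n|$ and $u=2s/a_0$; minimizing over $u$ (the minimum of $(2-\sqrt{1-u})/\sqrt{u}$ is $\sqrt3$, at $u=3/4$) yields the condition $\sqrt{6s}\le 1/2$, i.e.\ $s\le 1/24$, which falls short of $1/16$. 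The claim that the Szeg\H{o} outer factor ``gives the extra gain needed'' is unsubstantiated: there is no general estimate for the Wiener-algebra norm of the outer factor in terms of $\hat a$ or of its symmetric root, and that is precisely the hard point your argument would hinge on. The convex case is in worse shape: the Fej\'er-kernel identity gives nonnegativity of $\hat a$ and controls its $L^1$ mass, but the Wiener norm of a factor $\hat c$ is not controlled by $\|\hat a\|_{L^1}=\sum_k c_k^2$, so nothing in your sketch bounds $\sum_k|c_k|$ there at all.

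The missing idea is that no factorization is needed. The paper's Proposition~\ref{P:01} takes $X=\T^2$ with $T(x,y)=(x,y+x)$, $\mu=\sigma\times m_\T$ for an \emph{arbitrary} symmetric probability measure $\sigma$, and $f(x,y)=\frac12(1+\cos 2\pi y)$; then $T^nf(x,y)=\frac12(1+\cos 2\pi(y+nx))$ and a one-line computation gives $d_f(n)=\frac18\widehat{\sigma}(n)$ with $f$ already $[0,1]$-valued. The hypotheses on $(a_n)$ are then used only to ensure that $(1,8a_1,8a_2,\dots)$ is the Fourier coefficient sequence of a nonnegative density of total mass one --- immediate when $16\sum|a_n|<1$, and via the classical criterion for convex sequences in the second case --- after which Proposition~\ref{T:Correspondence} finishes exactly as you intended. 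So keep your stage 2, but replace stage 1: positivity of $\hat a$ suffices, and you never need a root of $\hat a$ with small Wiener norm.
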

\begin{remarks}
$\bullet$
We show something stronger: If $\sigma$ is a symmetric probability measure on $\T$, then there exist a system $(X,\B,\mu,T)$ and a set $A\in \X$ such that
$
d_A(n)=\frac{1}{8}\,  \widehat{\sigma}(n)$ for every   $n \in \N.$

$\bullet$ When it comes to under-recurrence, the first hypothesis
%%$\sum_{n=1}^\infty|a_n|<1/16$
is not so severe; if  $A$ is an under-recurrent set, then   the proof of  Proposition~\ref{P:key}  gives
$
\sum_{n=1}^\infty\big| d_A(n)| <1/2$.
%%in fact, the sum is smaller than  $\big(2\mu(A)(1-\mu(A))\big)^{-1}$.
\end{remarks}
%%Using the original method of Mate we can assert mixing properties but for a
%%more restricted class of sequences on the right hand side, for example if $(|a_n|)$ is  decreasing and has
 %%sum smaller than some constant (to be determined).

\subsection{Under and over recurrent functions}
Next,  we state results related to under and over recurrence properties of functions. For a particular class of systems, which we define next, it is possible to guarantee the existence of under and over recurrent functions.
\begin{definition}
We say that the system  $(X,\mathcal{X}, \mu,T)$ has
%%\begin{itemize}
%%\item
a {\em Lebesgue component} if there exists a function  $f\in L^2(\mu)$ with spectral measure equal to the Lebesgue measure on $\T$,
that is,  satisfies
$\norm{f}_{L^2(\mu)}=1$ and $\int f \cdot T^n f \, d\mu=0$ for every $n\in \N$.
%%\item an {\em absolutely continuous component} if there exists an $f\in L^2(\mu)$ with spectral measure absolutely continuous %%with respect to Lebesgue.
%%\end{itemize}
\end{definition}
Systems  having a Lebesgue component include ergodic nilsystems that are not rotations \cite[Theorem~4.2]{AGH63},
 \cite[Proposition~2.1]{HKM14},
and positive entropy systems  \cite{R60}.

The next result includes a convenient characterization of systems that have under-recurrent functions (it is the equivalence $(i) \Longleftrightarrow (iii)$).
%% which forms the basis of more refined consequences.

%%\subsection{Results for systems with a Lebesgue component}

\begin{theorem}\label{T:F1}
Let $(X,\mathcal{X}, \mu,T)$ be a system. Then the following  are equivalent:
\begin{enumerate}[(i)]

\item The system has  an  under-recurrent function.

\item The system has  a strictly under-recurrent function.

\item The system has a  Lebesgue component.

 \item For every  non-negative  $\phi\in L^1(m_\T)$
  there exists   $g\in L^2(\mu)$ such that
  $$
  \int g\, d\mu=0 \ \text{ and } \ \int g\cdot T^ng\, d\mu=\widehat{\phi}(n) \ \ \text{ for every }  n\in \N.
  $$

 %%$f\in L^2(\mu)$ such that  $\int f\cdot T^n f\, d\mu<0 $ for all $n\in \N$.
\end{enumerate}
\end{theorem}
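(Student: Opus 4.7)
My plan is to prove the cycle $(iii)\Rightarrow(iv)\Rightarrow(ii)\Rightarrow(i)\Rightarrow(iii)$, the only substantive step being the last.

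For the easy implications I would invoke standard spectral theory. Given a Lebesgue-component function $f$, the spectral theorem identifies the $f$-cyclic subspace of $L^2(\mu)$ with $L^2(\T,m_\T)$ via $T^n f\mapsto e^{2\pi i n\cdot}$. Under this identification each $\psi\in L^2(m_\T)$ corresponds to some $g=\psi(T)f\in L^2(\mu)$ with spectral measure $|\psi|^2\,dm_\T$ and $\int g\,d\mu=0$ (since $g$ lies in a subspace orthogonal to the constants). Taking $\psi=\sqrt{\phi}$ gives (iv); conversely, (iv) applied with $\phi\equiv 1$ yields (iii) directly. For $(iv)\Rightarrow(ii)$ I would invoke (iv) for the non-negative function $\phi(x):=\pi^2 x(1-x)$ on $[0,1)$, whose Fourier coefficients are $\widehat\phi(0)=\pi^2/6$ and $\widehat\phi(n)=-1/(2n^2)$ for $n\neq 0$; the resulting $g$ is non-constant (as $\|g\|_2^2=\pi^2/6>0$ and $\int g\,d\mu=0$) and satisfies $\int g\cdot T^n g\,d\mu=-1/(2n^2)<0=(\int g\,d\mu)^2$ for every $n\geq 1$, hence is strictly under-recurrent. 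The step $(ii)\Rightarrow(i)$ is immediate from the definitions.

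The core step is $(i)\Rightarrow(iii)$. Given an under-recurrent $f$, I would pass to $g:=f-\int f\,d\mu$, a non-zero mean-zero function with $c_n:=\int g\cdot T^n g\,d\mu\leq 0$ for every $n\geq 1$ and $c_0=\|g\|_2^2>0$. Expanding $\|g+Tg+\cdots+T^{N-1}g\|_2^2\geq 0$ and using $c_{-n}=c_n$ yields $Nc_0+2\sum_{d=1}^{N-1}(N-d)c_d\geq 0$; dividing by $N$ and letting $N\to\infty$ gives
\[
\sum_{n=1}^{\infty}|c_n|\leq\frac{c_0}{2}.
\]
Hence the Fourier coefficients of the spectral measure $\sigma_g$ are absolutely summable, so $\sigma_g=\rho\,dm_\T$ where $\rho(x)=c_0+2\sum_{n\geq 1}c_n\cos(2\pi n x)$ is a continuous non-negative function on $\T$.

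The main obstacle is upgrading this absolute continuity to the existence of a function with spectral measure exactly $m_\T$, which amounts to showing $\rho>0$ almost everywhere. The plan: if $\rho(x_0)=0$ for some $x_0\in\T$, then
\[
0=\rho(x_0)=c_0+2\sum_{n\geq 1}c_n\cos(2\pi n x_0)\geq c_0-2\sum_{n\geq 1}|c_n|\geq 0,
\]
so equality holds throughout and in particular $c_n\cos(2\pi n x_0)=c_n$ for each $n\geq 1$, forcing $\cos(2\pi n x_0)=1$ (hence $nx_0\in\Z$) whenever $c_n\neq 0$. This restricts $x_0$ to the finite subgroup $\frac{1}{d}\Z/\Z$ of $\T$ with $d:=\gcd\{n\geq 1:c_n\neq 0\}$ (and $\rho\equiv c_0>0$ if this set is empty), so $\{\rho=0\}$ is finite and $\rho>0$ a.e.\ on $\T$. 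Finally, under the spectral identification of the $g$-cyclic subspace with $L^2(\rho\,dm_\T)$, the multiplier $\psi:=\rho^{-1/2}\mathbf{1}_{\{\rho>0\}}$ belongs to $L^2(\rho\,dm_\T)$ and corresponds to $h\in L^2(\mu)$ with $\sigma_h=|\psi|^2\rho\,dm_\T=m_\T$, establishing (iii).
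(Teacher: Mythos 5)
Your proof is correct and follows essentially the same route as the paper: the cycle $(i)\Rightarrow(iii)\Rightarrow(iv)\Rightarrow(ii)\Rightarrow(i)$, with the same key estimate $\sum_{n\ge1}|c_n|\le c_0/2$ obtained by expanding $\norm{\sum_{n<N}T^ng}_2^2\ge0$, the same finiteness-of-the-zero-set argument for the density $\rho$, and the same spectral-multiplier construction of a function with spectral measure $m_\T$. The only cosmetic differences are your choice of $\phi(x)=\pi^2x(1-x)$ in place of the paper's lacunary series for $(iv)\Rightarrow(ii)$, and that your justification of $\int g\,d\mu=0$ in $(iii)\Rightarrow(iv)$ implicitly uses $\int f\,d\mu=0$, which (as in the paper) one should note follows from the mean ergodic theorem.
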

\begin{remark}
Our argument shows that  if $f\in L^2(\mu)$ is
under-recurrent, then the spectral measure of $f$ is absolutely continuous with respect to the Lebesgue measure  on $\T$ and
$\sum_{n=1}^\infty | \int f \cdot T^n f \, d\mu -(\int f\, d\mu)^2|<\infty.$

\end{remark}
We  deduce from the previous  result the following:
\begin{corollary}\label{C:F1}
Suppose that the system $(X,\X,\mu,T)$ has  an under-recurrent function.  Then it also  has a strictly over-recurrent function.
Furthermore, if the real valued sequence
$(a_n)_{n\in\N}$ is either  absolutely summable  or  decreases convexly to $0$, then there exists  $f\in L^2(\mu)$ such that
 $d_f(n)=a_n$ for every $n\in \N$, where $d_f(n):=\int f\cdot T^nf \, d\mu-(\int f\, d\mu)^2$.
\end{corollary}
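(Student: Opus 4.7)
The plan is to deduce everything from property~(iv) of Theorem~\ref{T:F1}, which is available by hypothesis. Recall that~(iv) states: for every non-negative $\phi \in L^1(m_\T)$ there exists $g \in L^2(\mu)$ with $\int g\, d\mu = 0$ and $\int g\cdot T^n g\, d\mu = \widehat{\phi}(n)$ for every $n\in\N$. In each case below it therefore suffices to exhibit a non-negative $\phi \in L^1(m_\T)$ whose Fourier coefficients at positive indices realize the prescribed values; the resulting $g$ automatically satisfies $d_g(n) = \widehat{\phi}(n)$ because it has mean zero.

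For the strictly over-recurrent assertion I would apply~(iv) to the Poisson kernel $\phi(t) := (1-r^2)/|1 - r e^{2\pi i t}|^2$ with some fixed $r \in (0,1)$: this is a non-negative continuous function on $\T$ with $\widehat{\phi}(n) = r^{|n|}$. The resulting $g$ satisfies $\int g\cdot T^n g \, d\mu = r^n > 0$ for every $n \in \N$; it is non-zero (since, e.g., $\int g \cdot Tg\, d\mu = r \neq 0$) and hence non-constant (its mean is $0$), so it is strictly over-recurrent.

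For the ``furthermore'' statement I would, in each of the two cases, build a non-negative $\phi \in L^1(m_\T)$ with $\widehat{\phi}(n) = a_n$ for all $n \geq 1$ and then apply~(iv). When $(a_n)$ is absolutely summable, the series $\psi(t) := 2 \sum_{n \geq 1} a_n \cos(2\pi n t)$ converges absolutely to a continuous real-valued function on $\T$, and $\phi := \psi - \min_{t\in\T} \psi(t)$ is a non-negative continuous function whose Fourier coefficients at $n \geq 1$ are unchanged and equal to $a_n$. When $(a_n)$ decreases convexly to $0$, I would extend the sequence by setting $a_0 := 2a_1 - a_2$, so that $(a_n)_{n \geq 0}$ is non-negative, convex, and tends to $0$; by the classical theorem of W.~H.~Young (see Zygmund, \emph{Trigonometric Series}, Chapter~V) the series $\phi(t) := a_0 + 2 \sum_{n \geq 1} a_n \cos(2\pi n t)$ is then the Fourier series of a non-negative function in $L^1(m_\T)$ whose $n$-th Fourier coefficient equals $a_{|n|}$.

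The only non-routine input is this invocation of Young's theorem in the convex case (which can alternatively be obtained by explicitly writing $\phi = \sum_{k \geq 1} k\, \Delta^2 a_{k-1}\, F_k$ as a non-negative combination of Fej\'er kernels $F_k$, with $L^1$-convergence ensured by the telescoping identity $\sum_{k \geq 1} k\, \Delta^2 a_{k-1} = a_0$); the rest of the argument is routine bookkeeping with Fourier coefficients.
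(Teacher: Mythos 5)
Your proposal is correct and follows essentially the same route as the paper: invoke the implication $(i)\Longrightarrow(iv)$ of Theorem~\ref{T:F1} and then manufacture a non-negative $\phi\in L^1(m_\T)$ with the prescribed positive Fourier coefficients, adding a constant in the absolutely summable case and appealing to the classical convex-coefficients theorem (Katznelson, Theorem~4.1, which is Young's theorem) in the convex case. The only cosmetic differences are your explicit choice $a_0:=2a_1-a_2$ (which the paper leaves implicit) and your use of the Poisson kernel to produce the strictly over-recurrent function, which is just a particular instance of the ``furthermore'' clause with a positive summable sequence.
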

Finally, we record a  variant of Theorem~\ref{T:F1} which can be used (via Proposition~\ref{T:Correspondence} below) in order to deduce the existence of under or over recurrent  sets for certain classes of systems.
\begin{theorem}\label{T:F2}
If the system  $(X,\mathcal{X}, \mu,T)$ has a   Lebesgue (spectral) measure realized by a bounded function, then it has a strictly
under-recurrent function with values in $[0,1]$ and a strictly
over-recurrent function with values in $[0,1]$.
\end{theorem}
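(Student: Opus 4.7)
The plan is to construct a bounded mean-zero function $g\in L^\infty(\mu)$ whose correlations $\int g\cdot T^n g\, d\mu$ have a prescribed sign (negative for under, positive for over) for every $n\in\N$, and then to convert $g$ into a $[0,1]$-valued function via the recipe $f=\tfrac12+\varepsilon g$ with $\varepsilon$ small enough that $f$ remains in $[0,1]$. Since $\int f\, d\mu=\tfrac12$ and $d_f(n)=\varepsilon^2\int g\cdot T^n g\, d\mu$, the strict sign is preserved under this translation, and the boundedness of $g$ is precisely what makes the $\varepsilon$-rescaling work; this is where the hypothesis that the Lebesgue component is realized by a \emph{bounded} $h\in L^\infty(\mu)$ enters.

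I would look for $g$ as an absolutely convergent series in the powers of $T$ applied to $h$, namely
\[
g=\sum_{k\geq 0} c_k\, T^k h,\qquad (c_k)\in\ell^1.
\]
Since $\|T^k h\|_\infty=\|h\|_\infty$ for every $k$, the series converges in $L^\infty$ with $\|g\|_\infty\leq\|h\|_\infty\sum_k|c_k|$, and $\int g\, d\mu=0$ (using that $\int h\, d\mu=0$, which follows from the absence of an atom at zero in the spectral measure of $h$). Combining measure preservation with the identity $\int h\cdot T^n h\, d\mu=\delta_{n,0}$ (which extends to all integer shifts because the spectral measure of $h$ is Lebesgue), the correlations collapse to a self-convolution of the coefficients:
\[
\int g\cdot T^n g\, d\mu=\sum_{k\geq 0} c_k c_{k+n},\qquad n\in\N.
\]

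For the over-recurrent function I would take $c_k=\delta^k$ with any $\delta\in(0,1)$; the self-convolution sums in closed form to the uniformly positive $\delta^n/(1-\delta^2)$. For the under-recurrent function the natural ansatz is $c_0=1$ and $c_k=-\delta^k$ for $k\geq 1$, so that the single ``cross-term'' $c_0c_n=-\delta^n$ must dominate the positive tail $\sum_{k\geq 1}\delta^{2k+n}=\delta^{n+2}/(1-\delta^2)$; a short computation gives $\delta^n(2\delta^2-1)/(1-\delta^2)$, which is uniformly strictly negative exactly when $\delta<1/\sqrt{2}$. Taking $\varepsilon=(1-\delta)/(2\|h\|_\infty)$ then places $f=\tfrac12+\varepsilon g$ inside $[0,1]$ and yields both $[0,1]$-valued strictly under- and strictly over-recurrent functions. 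The only real obstacle I anticipate is identifying the ansatz for $(c_k)$: one needs an $\ell^1$ sequence whose self-convolution has a definite sign at \emph{every} positive shift, and the geometric choice above is attractive because the convolution is summable in closed form and the dominance condition collapses to a single inequality on $\delta$.
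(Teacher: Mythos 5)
Your proposal is correct and follows essentially the same route as the paper: the paper's Proposition~\ref{P:+-} also writes the sought function as an $\ell^1$ series $\sum_k a_k T^k g$ in the orthonormal shifts of the bounded Lebesgue-component function, so that the correlations become the self-convolution $a_n+\sum_k a_ka_{k+n}$, and then rescales affinely into $[0,1]$. Your geometric choice $c_k=\pm\delta^k$ is just a concrete instance (evaluated in closed form rather than dominated termwise) of the paper's coefficient sequence, which is chosen more generally there so as to prescribe an arbitrary sign pattern over a partition $\N=S_+\cup S_-$.
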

\begin{remark}
For a stronger statement see Proposition~\ref{P:+-}. Note also that by a theorem of V. M. Alexeyev \cite{Al82} our assumption is satisfied for every system that has Lebesgue maximal spectral type.
\end{remark}
\subsection{Combinatorial consequences}
Finally, we use under and over recurrence properties  of ergodic measure preserving systems in order to deduce
some   combinatorial consequences. In what follows, with
$d(E)$ and $\bar{d}(E)$  we denote the  density and the upper density of a set $E\subset \N$ respectively. Whenever we write $d(E)$ we implicitly assume that the density of the set $E$ exists.
Using the remark made immediately after Theorem~\ref{T:S1} we deduce the following:
\begin{theorem}\label{T:c1}
For any partition $\N=S_+\cup S_-$ there exists  $E\subset \N$, such that
\begin{enumerate}
\item $d(E\cap (E-n))>d(E)^2$  \ for  every \ $n\in S_+$;
\item $d(E\cap (E-n))<d(E)^2$ \ for  every  \ $n\in S_-$;
\item  $\lim_{n\to\infty}d(E\cap (E-n))=d(E)^2$.
\end{enumerate}
\end{theorem}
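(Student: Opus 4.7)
The plan is to transfer the remark immediately following Theorem~\ref{T:S1} into a combinatorial statement using the classical ergodic-theoretic-to-combinatorial direction of Furstenberg's correspondence: realize $E$ as the return-time set $\{k\in\N:T^kx\in A\}$ of a single point $x$ that is generic for enough observables, and then use the pointwise ergodic theorem to identify densities with measures.

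Given the partition $\N=S_+\cup S_-$, first invoke the remark after Theorem~\ref{T:S1} to obtain a mixing (hence ergodic) system $(X,\X,\mu,T)$ and a set $A\in\X$ with $0<\mu(A)<1$ such that $\mu(A\cap T^{-n}A)>\mu(A)^2$ for every $n\in S_+$ and $\mu(A\cap T^{-n}A)<\mu(A)^2$ for every $n\in S_-$; mixing additionally forces $\lim_{n\to\infty}\mu(A\cap T^{-n}A)=\mu(A)^2$. Next, apply the pointwise ergodic theorem to each member of the countable family $\{\one_A\}\cup\{\one_{A\cap T^{-n}A}:n\in\N\}$ and intersect the resulting full-measure sets to produce a single $x\in X$ that is generic for every function in this family. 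Set $E:=\{k\in\N:T^kx\in A\}$. Then $d(E)=\mu(A)$, and using the identity $\one_E(k)\,\one_E(k+n)=\one_A(T^kx)\,\one_A(T^{k+n}x)=\one_{A\cap T^{-n}A}(T^kx)$ one obtains
$$d(E\cap(E-n))=\lim_{N\to\infty}\frac{1}{N}\sum_{k=1}^N\one_{A\cap T^{-n}A}(T^kx)=\mu(A\cap T^{-n}A)$$
for every $n\in\N$. The three asserted properties of $E$ then follow term by term from the corresponding properties of $A$ and the $n\to\infty$ behaviour listed above.

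The only substantive work in this argument lies in Theorem~\ref{T:S1} and its strengthened form in the remark; the present deduction is a soft translation. The one conceptual subtlety is that the pointwise ergodic theorem a priori supplies an $n$-dependent full-measure set of generic points, but countability of $\N$ lets us select a single $x$ that is generic for all $n$ simultaneously, which is exactly what is needed to obtain all the density identities at the same point $x$. I do not foresee any real obstacle beyond this clean bookkeeping.
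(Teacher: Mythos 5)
Your proposal is correct and follows essentially the same route as the paper: invoke the strengthened form of Theorem~\ref{T:S1} stated in the remark after it, choose a single point generic (via the pointwise ergodic theorem) for the countable family $\{\one_{A\cap T^{-n}A}\}_{n\geq 0}$, and read off the density identities for the return-time set $E$, with mixing supplying property (iii). No gaps.
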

Note that even the existence of a set of positive integers $E$ that satisfies Property $\text{(ii)}$  for $S_-=\N$
seems non-trivial to establish.

It can be shown (one way is to deduce this from the corresponding ergodic statement via the Furstenberg correspondence principle) that for every  set $E\subset \N$ and every $\ell\in \N$ and $\varepsilon>0$, we have
$$
 \bar{d}(E\cap (E-n_1)\cap\cdots\cap (E-n_\ell))\geq \bar{d}(E)^{\ell+1} -\varepsilon
$$
for infinitely many distinct $n_1,\ldots, n_\ell\in \N$.
Using   Theorem~\ref{T:c1} we show that for every $\ell\in \N$ the $\varepsilon$ in the above statement cannot in general be removed.
\begin{theorem}\label{T:c2}
For every $r \in \N$ there exists  a set of positive integers $E$ such that
for  $\ell=1,\ldots, r$ we have
\begin{enumerate}
\item $d(E\cap (E-n_1)\cap\cdots\cap (E-n_\ell))<d(E)^{\ell+1}$
for all  distinct $n_1,\ldots, n_\ell\in \N$;
\smallskip

\item $\lim_{ n_1,\ldots, n_\ell }d(E\cap (E-n_1)\cap\cdots\cap (E-n_\ell))=d(E)^{\ell+1}$,
\end{enumerate}
where in $\text{(ii)}$ the limit is taken when  $\min\{n_1, n_2-n_1,\ldots, n_\ell-n_{\ell-1}\}\to \infty$.

Moreover, there exists $E\subset \N$ that satisfies Property $(i)$ with the  strict inequality reversed, and Property $(ii)$.
\end{theorem}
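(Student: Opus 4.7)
The plan is to convert the ergodic construction of Theorem~\ref{T:S3} into a combinatorial one by running Furstenberg's correspondence in reverse: rather than embedding a subset of the integers inside a dynamical system, I will read off the indicator sequence along the orbit of a suitably generic point. First, apply Theorem~\ref{T:S3} with parameter $d=r$ to obtain a multiple mixing (hence ergodic) system $(X,\X,\mu,T)$ and a set $A\in\X$ with $0<\mu(A)<1$ such that for every $\ell\in\{1,\ldots,r\}$ and every tuple $(n_1,\ldots,n_\ell)$ of distinct positive integers,
$$\mu(A\cap T^{-n_1}A\cap\cdots\cap T^{-n_\ell}A)<\mu(A)^{\ell+1}.$$

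For each such tuple, Birkhoff's pointwise ergodic theorem applied to the function $\one_A\cdot T^{n_1}\one_A\cdots T^{n_\ell}\one_A$ yields a $\mu$-conull set of generic points. Since there are only countably many such tuples (across all $\ell\leq r$), intersecting produces a $\mu$-conull set $X_0\subseteq X$ of points that are simultaneously generic for every one of these averages. Fix any $x\in X_0$ and define $E:=\{n\in\N : T^nx\in A\}$. The pointwise identity
$$\one_{E\cap(E-n_1)\cap\cdots\cap(E-n_\ell)}(n)=\one_{A\cap T^{-n_1}A\cap\cdots\cap T^{-n_\ell}A}(T^nx)$$
together with genericity of $x$ shows that $d(E)$ exists and equals $\mu(A)$, and that $d(E\cap(E-n_1)\cap\cdots\cap(E-n_\ell))$ exists and equals $\mu(A\cap T^{-n_1}A\cap\cdots\cap T^{-n_\ell}A)$ for every such tuple. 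Property~(i) is then a direct rewriting of the strict inequality displayed above, while Property~(ii) is immediate from the multiple mixing property applied to the corresponding measure-theoretic limit.

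The moreover assertion is obtained by the same argument, applied this time to the companion part of Theorem~\ref{T:S3} that produces a set $A$ satisfying the reverse strict inequality $\mu(A\cap T^{-n_1}A\cap\cdots\cap T^{-n_\ell}A)>\mu(A)^{\ell+1}$. The proof presents no serious obstacle: it is essentially a countability argument combined with Birkhoff's theorem. The only point that warrants attention is selecting one single $x$ that is generic for every tuple simultaneously, and this is automatic once one observes that the family of tuples is countable and that ergodicity supplies a $\mu$-conull set of generic points for each individual average.
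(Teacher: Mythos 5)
Your proof is correct and follows essentially the same route as the paper: apply Theorem~\ref{T:S3}, use the ergodic theorem to select a single point generic for the countably many functions $\one_{A\cap T^{-n_1}A\cap\cdots\cap T^{-n_\ell}A}$, read off $E$ as the set of return times, and deduce (i) from the strict inequalities and (ii) from multiple mixing. The reversed-inequality case is handled identically in both arguments.
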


\subsection{Open problems}\label{SS:Problems}
 Theorem~\ref{T:S2}  asserts that there exist mixing systems with no under-recurrent functions and   the key to our construction was the fact that
  every under-recurrent   function  has
 spectral measure absolutely continuous with respect to the Lebesgue measure.
This   property  is not shared by over-recurrent functions (see the example in Section~\ref{SS:ex2}), and
in fact,  constructing weakly mixing systems that have no  over-recurrent functions (or  sets) turns out to be much harder, perhaps even impossible.\footnote{On the other hand, it is not hard to verify that  ergodic Kronecker systems do not have over-recurrent functions.}
 This leads to the following question (a variant of it was also  asked by V.~Bergelson    in \cite[Problem 1]{Be96}):

\medskip
\noindent {\bf Problem 1.}
{\em Does every  (weakly) mixing  system have an  over-recurrent set or  a bounded over-recurrent function?}
\medskip

  C.~Badea and  V.~ M\"{u}ller showed in  \cite{BM09} that every mixing system has a strictly  over-recurrent function
 in $L^2(\mu)$.  In fact, on every mixing system, for every sequence $(a_n)_{n\in \N}$ of positive  reals that converges to $0$, and every $\varepsilon>0$, there exists a function $f\in L^2(\mu)$ with $\norm{f}_{L^2(\mu)}\leq \sup_n a_n+\varepsilon$ and such that $\int f\cdot T^nf\, d\mu>a_n$ for every $n\in \N$.

It is possible (via Proposition~\ref{T:Correspondence} below)  to transfer recurrence  properties
 of a function with values in $[0,1]$ in a given system, to recurrence properties of a set in a different  system.
 It is not clear  whether a similar construction can take place without changing the system.

\medskip

\noindent {\bf Problem 2.}
{\em
If a system has an under-recurrent function with values in $[0,1]$ does it always have an under-recurrent set?}

\subsection{ Acknowledgments} We would like to thank V.~Bergelson for bringing to our attention the article \cite{BM09},  B.~Host  for a construction used  in Section~\ref{SS:ex2},
and the referee for useful comments.

\section{Under and over recurrent functions}
In this section, we give the proofs of the results
 pertaining to   under and over recurrence properties of functions;  in the next section,
    we use some of these results in order to deduce analogous properties for sets.

\subsection{Proof of Theorem~\ref{T:F1} and Corollary~\ref{C:F1}}
In this subsection, it
is convenient to think of a correlation sequence $(\int f\cdot T^nf\, d\mu)_{n\in\N}$
 as the sequence of Fourier coefficients %%$(\widehat{\sigma_f}(n))$
 of the  spectral measure
$\sigma_f$ of the function $f$. This way, if $\int f\, d\mu=0$, then
under or over recurrence properties of a function $f\in L^2(\mu)$ correspond to statements about
the sign of the sequence $(\widehat{\sigma_f}(n))_{n\in\N}$. Keeping this in mind,
the key to the proof of Theorem~\ref{T:F1} is the following simple Fourier analysis result:
\begin{proposition}\label{P:key}
Let $\sigma$ be a  probability measure  on $\T$ such that  $\text{Re}(\widehat{\sigma}(n))\leq 0$ for every  $n\in \N$. Then
$$
\sum_{n=1}^\infty |\text{Re}(\widehat{\sigma}(n))|\leq 1/2.
$$
%%Furthermore we have strict inequality unless there exists $n_0\in \N$ such that
%%$\widehat{\sigma}(n)= 0$ for all $n\geq n_0$.
\end{proposition}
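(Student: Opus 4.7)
The plan is to exploit the positivity of $\sigma$ by integrating it against a non-negative trigonometric polynomial with prescribed (positive) Fourier coefficients, so that the resulting inequality gives a one-sided bound on the Cesàro-type partial sums of $\text{Re}(\widehat{\sigma}(n))$. The natural choice is the Fejér kernel, since it is non-negative and its coefficients decay linearly to zero.

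Concretely, first I would set $K_N(t):=\sum_{|n|\leq N}\bigl(1-\tfrac{|n|}{N+1}\bigr)e^{2\pi i n t}\geq 0$ and integrate it against $\sigma$. Using $\widehat{\sigma}(-n)=\overline{\widehat{\sigma}(n)}$ and $\widehat{\sigma}(0)=1$, a direct computation collapses the sum of complex Fourier coefficients into a real one:
$$
0\leq \int_{\T} K_N\, d\sigma \;=\; 1+2\sum_{n=1}^{N}\Bigl(1-\tfrac{n}{N+1}\Bigr)\,\text{Re}\bigl(\widehat{\sigma}(n)\bigr).
$$

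Next, I would use the hypothesis $\text{Re}(\widehat{\sigma}(n))\leq 0$, so that $|\text{Re}(\widehat{\sigma}(n))|=-\text{Re}(\widehat{\sigma}(n))$. Rearranging the previous display yields
$$
2\sum_{n=1}^{N}\Bigl(1-\tfrac{n}{N+1}\Bigr)\,\bigl|\text{Re}(\widehat{\sigma}(n))\bigr|\;\leq\;1
$$
for every $N\in\N$. Since every term on the left is non-negative and $(1-n/(N+1))\to 1$ as $N\to\infty$ for each fixed $n$, I can apply the monotone convergence theorem (or Fatou) to the counting measure on $\N$, obtaining $\sum_{n=1}^{\infty}|\text{Re}(\widehat{\sigma}(n))|\leq 1/2$, as required.

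I do not foresee a substantive obstacle here; the only care needed is with bookkeeping of the Fourier-transform convention and the symmetry of $\widehat{\sigma}$. The essential idea is simply that multiplying the summability problem through by the Fejér weights produces a finite trigonometric polynomial whose integral against the positive measure $\sigma$ is manifestly non-negative, and the sign hypothesis on $\text{Re}(\widehat{\sigma}(n))$ converts this one-sided bound into an absolute one.
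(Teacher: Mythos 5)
Your proof is correct and is essentially the paper's own argument in different clothing: the non-negative polynomial $\bigl|\sum_{n=1}^N e^{2\pi i n t}\bigr|^2$ that the paper integrates against $\sigma$ is exactly $N$ times a Fejér kernel, so your choice of test function and the resulting weighted inequality coincide with theirs up to normalization. The only cosmetic difference is in the passage to the limit (you invoke monotone convergence on the Fejér-weighted sums, the paper uses monotonicity of the partial sums $-S_n$ and a Ces\`aro average), and both are fine.
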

\begin{remark}
 Our argument shows that if $\sigma$ is a symmetric probability measure  on $\T$ with a convergent sum of positive Fourier coefficients, then
$
-\sum_{n\in F_-}\widehat{\sigma}(n)\leq \sum_{n\in F_+}\widehat{\sigma}(n)+\frac{1}{2},
$
where $F_+:=\{n\in \N\colon \widehat{\sigma}(n)> 0\}$ and $F_-:=\{n\in \N\colon \widehat{\sigma}(n)< 0\}$.
\end{remark}
\begin{proof}[Proof of Proposition \ref{P:key}]
For every $N\in \N$ we have
\begin{align*}\label{E:a1}
0\leq \int \Big|\sum_{n=1}^N e(nt)\Big|^2\,  d\sigma(t)&=N+2\,\text{Re}\!\sum_{1\leq m<n\leq N} \int e((n-m)t)\, d\sigma(t)=\\
 &=N+2\,\sum_{k=1}^N (N-k)\,\text{Re}(\widehat{\sigma}(k))=N+2\,\sum_{k=1}^{N-1} S_k,\notag
\end{align*}
where $S_k=\sum_{n=1}^k\text{Re} (\widehat{\sigma}(n))$ and  $e(t):=e^{2\pi i t}$.
We conclude that
\begin{equation}\label{E:a3}
\frac{1}{2}+\frac{1}{2N}+\frac{1}{N}\sum_{n=1}^N S_n\geq 0 \quad \text{for every } N\in \N.
\end{equation}

Since $(-S_n)$ is a non-decreasing sequence in $[0,\infty)$, it has a limit
\[
L=\sum_{n=1}^\infty |\text{Re}(\widehat{\sigma}(n))|\in[0,\infty].
\]
Taking $N\!\to\!\infty$ in \eqref{E:a3} results in the inequality
$\frac12-L\geq0$, completing the proof.
\end{proof}

\begin{corollary}\label{C:Leb}
Let $\sigma$ be a symmetric probability measure on $\T$ such that
$\widehat{\sigma}(n)\leq 0$ for every $n\in \N$. Then $\sigma$ is equivalent to the Lebesgue measure on $\T$.
\end{corollary}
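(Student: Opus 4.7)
The plan is to combine Proposition~\ref{P:key} with the observation that, under the sign hypothesis, the density of $\sigma$ attains its minimum at the origin. First, since $\sigma$ is a symmetric probability measure, $\widehat{\sigma}$ is real valued and satisfies $\widehat{\sigma}(-n)=\widehat{\sigma}(n)$, so Proposition~\ref{P:key} (applied directly to $\sigma$) yields $\sum_{n\in\Z}|\widehat{\sigma}(n)|<\infty$. Absolute summability implies that the trigonometric series $\phi(t):=\sum_{n\in\Z}\widehat{\sigma}(n)\,e(nt)$ converges uniformly to a continuous function $\phi$ on $\T$, and by uniqueness of Fourier coefficients $\phi$ is the Radon--Nikodym density of $\sigma$ with respect to $m_\T$. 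Consequently $\sigma\ll m_\T$ and $\phi\geq 0$; the remaining task is to show that $\phi>0$ $m_\T$-almost everywhere, which is equivalent to $m_\T\ll\sigma$.

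To that end, I would use symmetry to rewrite
$$
\phi(t)-\phi(0)\;=\;2\sum_{n=1}^{\infty}\widehat{\sigma}(n)\bigl(\cos(2\pi nt)-1\bigr),
$$
and observe that under the hypothesis $\widehat{\sigma}(n)\leq 0$ every summand is a product of two non-positive quantities, hence non-negative. This yields the key inequality $\phi(t)\geq\phi(0)$ for all $t\in\T$. If $\phi(0)>0$, then $\phi>0$ everywhere on $\T$ and the desired equivalence $\sigma\sim m_\T$ follows at once.

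The only delicate point, and the main obstacle, is the boundary case $\phi(0)=0$, which corresponds to equality in Proposition~\ref{P:key}. Here the vanishing $\phi(t)=0$ forces each summand above to vanish, so $nt\in\Z$ for every $n\geq 1$ with $\widehat{\sigma}(n)<0$. If no such $n$ existed then $\widehat{\sigma}(n)=0$ for all $n\neq 0$, so $\sigma=m_\T$ and $\phi\equiv 1$, contradicting $\phi(0)=0$; fixing any one $n_0\geq 1$ with $\widehat{\sigma}(n_0)<0$ then confines the zero set of $\phi$ to the finite set $\{k/n_0:0\leq k<n_0\}$, which is certainly $m_\T$-null. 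Thus $\phi>0$ $m_\T$-almost everywhere, yielding $m_\T\ll\sigma$, and combined with the first step this establishes the claimed equivalence $\sigma\sim m_\T$.
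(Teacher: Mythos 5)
Your proof is correct and takes essentially the same route as the paper: Proposition~\ref{P:key} gives absolute summability of the Fourier coefficients, hence a continuous non-negative density $\phi$ for $\sigma$, and equivalence with $m_\T$ follows once the zero set of $\phi$ is shown to be finite. The paper reaches that last point via the explicit bound $\sum_{n\geq 1}|\widehat{\sigma}(n)|\leq 1/2$ (so $\phi(t)=0$ forces $\sum|\widehat{\sigma}(n)\cos(2\pi nt)|=\sum|\widehat{\sigma}(n)|=1/2$), while you use the inequality $\phi(t)\geq\phi(0)$; both reduce to the same observation that a vanishing sum of non-negative terms forces $\cos(2\pi n_0 t)=1$ for some fixed $n_0$ with $\widehat{\sigma}(n_0)<0$, confining the zeros to a finite set.
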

\begin{proof}
Using Proposition~\ref{P:key} we get that
$$
\sum_{n=1}^\infty |c_n|\leq 1/2
$$
where $c_n:=\widehat{\sigma}(n)$, $n\in \N$. Then
$d\sigma= \phi\, dm_\T$ where
$$
\phi(t):=1+2\sum_{n=1}^\infty c_n \cos(2\pi nt).
$$
We have that $\phi(t)\geq 0$ for all $ t\in \R$, with  equality only if
$$\sum_{n=1}^\infty |c_n \cos(2\pi nt)|=\sum_{n=1}^\infty |c_n|=1/2.
$$
 This can happen   only for finitely many $t\in \T$. It follows that $\sigma$ is equivalent to the measure $m_\T$.
\end{proof}
We also need the following classic result from the spectral theory of unitary operators:
\begin{proposition}\label{P:spectral}
Let $(X,\X,\mu,T)$ be a system, $f\in L^2(\mu)$ be a function, and let
 $\rho$ be a finite measure that is absolutely continuous with respect to $\sigma_f$. Then
there exists    $g\in L^2(\mu)$ with $\sigma_g=\rho$.
\end{proposition}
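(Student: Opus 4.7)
The plan is to invoke the cyclic-subspace form of the spectral theorem. Let $Z(f):=\overline{\mathrm{span}}\{T^n f:n\ge 0\}$ be the closed cyclic subspace generated by $f$ under the Koopman isometry $T$. By the very definition of the spectral measure $\sigma_f$ (i.e.\ its Fourier coefficients agree with the correlation sequence of $f$), one has, for every trigonometric polynomial $P(z)=\sum_k c_k z^k$, the isometric identity
$$
\Bigl\|\sum_k c_k T^k f\Bigr\|_{L^2(\mu)}^2
=\sum_{j,k}c_j\overline{c_k}\,\widehat{\sigma_f}(j-k)
=\int_\T\Bigl|\sum_k c_k e^{2\pi i kt}\Bigr|^2 d\sigma_f(t).
$$
Thus the map $U:T^n f\mapsto e_n$, with $e_n(t):=e^{2\pi i nt}$, extends by density to a unitary isomorphism $U:Z(f)\to L^2(\T,\sigma_f)$ that intertwines $T$ with multiplication by $e_1$, i.e.\ $U\circ T=M_{e_1}\circ U$.

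Now apply Radon--Nikodym: since $\rho\ll\sigma_f$ and $\rho$ is finite, there exists a nonnegative $h\in L^1(\sigma_f)$ with $d\rho=h\,d\sigma_f$. Set $\phi:=\sqrt{h}\in L^2(\sigma_f)$, and let $g:=U^{-1}(\phi)\in Z(f)\subset L^2(\mu)$. Then
$$
\widehat{\sigma_g}(n)=\langle T^n g,g\rangle_{L^2(\mu)}
=\langle M_{e_n}\phi,\phi\rangle_{L^2(\sigma_f)}
=\int_\T e_n(t)\,|\phi(t)|^2\,d\sigma_f(t)
=\int_\T e_n\,d\rho=\widehat{\rho}(n),
$$
for every $n\in\Z$, whence $\sigma_g=\rho$ by uniqueness of Fourier coefficients.

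The one point that needs care, given the paper's standing convention that all functions are real valued, is producing a real $g$. This requires $\sigma_g$ to be symmetric on $\T$, which in the intended applications of the proposition (where $\rho$ arises as a candidate spectral measure) is automatic. Given symmetric $\rho$, one may choose the Radon--Nikodym density $h$ to be symmetric (by replacing it with $\tfrac12(h(t)+h(-t))$, which does not change $\rho$); then $\phi=\sqrt{h}$ is real and symmetric, and under the isomorphism $U$, real-symmetric elements of $L^2(\T,\sigma_f)$ correspond precisely to real elements of $Z(f)$, so $g$ is real. The only real obstacle in this argument is setting up the spectral isomorphism $U$ rigorously on the cyclic subspace — once that is in hand, the construction of $g$ is immediate from Radon--Nikodym and the identity $\sigma_{U^{-1}\phi}=|\phi|^2\,\sigma_f$.
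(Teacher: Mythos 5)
Your proof is correct and follows essentially the same route as the paper: write $d\rho=h\,d\sigma_f$ by Radon--Nikodym, take $\psi=\sqrt{h}\in L^2(\sigma_f)$, and set $g:=\psi(T)f$, which is exactly your $U^{-1}(\phi)$ under the spectral isomorphism of the cyclic subspace (the paper simply cites Queff\'elec for the functional calculus rather than constructing $U$ explicitly). You are in fact slightly more careful than the paper on one point: realness of $g$ requires $\rho$, and hence the density, to be symmetric, which the paper leaves implicit.
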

\begin{proof}
We have that $d\rho= \phi\,  d\sigma_f$ for some non-negative function  $\phi\in L^1( \sigma_f)$.
Then  $\phi=\psi^2$ for some  real valued $\psi\in L^2( \sigma_f)$.
 Let $g:=\psi(T)f$ (see \cite[Corollary~2.15]{Qu10}  for the definition of the operator $\psi(T)$). Then $g$ is  real valued, $g\in L^2(\mu)$,  and
$d\sigma_g=\psi^2\, d\sigma_f=d\rho$, that is, $\sigma_g=\rho$.
\end{proof}

\begin{proof}[Proof of Theorem~\ref{T:F1}]
We show that $(i)\Longrightarrow (iii)$. Suppose that $f$ is an under-recurrent function.  Then the
spectral measure of the real valued function $g:=f-\int f\, d\mu$ is symmetric, not identically $0$ (since our standing assumption is that $f$ is non-constant),  and satisfies
$$
\widehat{\sigma_g}(n)=\int f\cdot T^nf\, d\mu -\Big(\int f\, d\mu\Big)^2\leq 0 \ \text{ for every } n\in \N.
$$
 Hence,  by Corollary~\ref{C:Leb}, the measure $\sigma_g$ is equivalent to $m_\T$. We deduce from this and Proposition~\ref{P:spectral},  that  there exists a function $h\in L^2(\mu)$ with $\sigma_h=m_\T$. Hence, the system has a Lebesgue component.

We show that $(iii)\Longrightarrow (iv)$. Let $f\in L^2(\mu)$ have spectral measure
 $\sigma_f=m_\T$ and  let $\phi\in L^1(m_\T)$ be non-negative. By Proposition~\ref{P:spectral} there exists  a  zero mean
 $g\in L^2(\mu)$ such that $d\sigma_g=\phi \, dm_\T$. Then
 $$
 \int g \cdot T^n g\, d\mu=\widehat{\sigma}_g(n)=\widehat{\phi}(n) \ \ \text{ for every } n\in \N.
$$
Furthermore, since $\int g \cdot T^n g\, d\mu=\widehat{\phi}(n)\to 0$ as $n\to \infty$, we deduce from the ergodic theorem that
$$
\Big(\int g\, d\mu\Big)^2\leq \lim_{N\to \infty}\frac{1}{N}\sum_{n=1}^N\int g\cdot T^ng\, d\mu=0.
$$
Hence, $\int g\, d\mu=0$.

We show that $(iv)\Longrightarrow (ii)$.  We apply $(iv)$ for the non-negative (real valued) function $\phi\in L^\infty(\T)$ defined by
$$
\phi(t):=1-\sum_{n=1}^\infty \frac{1}{2^{n+2}}(e(nt)+e(-nt)).
$$
We get that there exists $g\in L^2(\mu)$ such that $\sigma_g=\phi \, dm_\T$. Then
$$
\int g\cdot T^ng\, d\mu=-\frac{1}{2^{n+2}}<0=\Big(\int g\, d\mu\Big)^2 \ \text{ for every} \ n\in \N.
$$
Hence, the function $g$ is strictly under-recurrent.

Finally, the implication $(ii)\Longrightarrow (i)$ is obvious.
\end{proof}

\begin{proof}[Proof of Corollary~\ref{C:F1}] The proof follows by combining the implication
$(i)\Longrightarrow (iv)$ of Theorem~\ref{T:F1} and the fact that under the stated assumptions there exists a non-negative even function $\phi\in L^1(m_\T)$ such that $\widehat{\phi}(n)=a_n$ for every $n\in \N$. Indeed, if $(a_n)_{n\in \N}$ decreases convexly to $0$, then this is a classic result (see for example  \cite[Theorem~4.1]{Ka04}); on the other hand,  if $\sum_{n=1}^\infty|a_n|=A$, we let  $\phi(t):=2A+\sum_{n=1}^\infty a_n (e(nt)+e(-nt))$.
\end{proof}

\subsection{Proof of Theorem~\ref{T:F2}}
We prove a more general result:
\begin{proposition}\label{P:+-}
Let $(X,\X,\mu,T)$ be a system  that has a Lebesgue component defined by an $L^\infty(\mu)$ function.
 Then for any partition $\N=S_+ \cup S_{-}$  there exists $f\in L^\infty(\mu)$, with values in $[0,1]$, such that
\begin{enumerate}\label{E:+-}
\item \label{E:+} $\int f\cdot T^nf\, d\mu>(\int f\, d\mu)^2$ \quad for  every \ $n\in S_+$;
\item \label{E:-} $\int f\cdot T^nf\, d\mu< (\int f\, d\mu)^2$\quad   for every \  $n\in S_-.$
\end{enumerate}
\end{proposition}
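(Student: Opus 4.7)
The plan is to construct $f$ in the form $f := 1/2 + \lambda g$, where $g\in L^\infty(\mu)$ is a bounded, mean-zero function whose correlation sequence $\int g \cdot T^n g\, d\mu$ has the prescribed sign on $S_\pm$. Let $h\in L^\infty(\mu)$ realise the Lebesgue component, so that $\sigma_h=m_\T$; in particular $\int h\cdot T^n h\, d\mu=0$ for every $n\geq 1$, and $\int h\, d\mu=0$ because $m_\T$ has no atom at the origin. Define the sign sequence $\epsilon_n:=+1$ for $n\in S_+$ and $\epsilon_n:=-1$ for $n\in S_-$, pick strictly positive weights $b_n$ with $\sum_{n\geq 1} b_n\leq 1/4$ (for instance $b_n:=2^{-n-2}$), and set
$$\phi(t):= 1 + 2\sum_{n=1}^\infty \epsilon_n b_n \cos(2\pi n t).$$
Then $\phi$ lies in the Wiener algebra $A(\T)$, is bounded below by $1/2$, and satisfies $\widehat{\phi}(n)=\epsilon_n b_n$ for every $n\neq 0$.

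The heart of the argument is to factor $\phi$ as $\psi^2$ with a real, even $\psi\in A(\T)$. Since $\phi\in A(\T)$ is real and bounded away from $0$, and the square root is analytic on a neighbourhood of the range of $\phi$, the Wiener--L\'evy theorem yields $\psi:=\sqrt{\phi}\in A(\T)$. I then define
$$g := \psi(T) h = \sum_{k\in\Z} \widehat{\psi}(k)\, T^k h.$$
Because $\sum_k|\widehat{\psi}(k)|<\infty$ and $\|T^k h\|_\infty=\|h\|_\infty$, the series converges absolutely in $L^\infty(\mu)$, so $g\in L^\infty(\mu)$; it is real-valued because $\psi$ is real and even; and $\int g\, d\mu=0$ since $T$ preserves $\mu$ and $\int h\, d\mu=0$. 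By the functional calculus used in Proposition~\ref{P:spectral}, $\sigma_g=|\psi|^2\sigma_h=\phi\, dm_\T$, and therefore
$$\int g\cdot T^n g\, d\mu = \widehat{\phi}(n) = \epsilon_n b_n \quad \text{for every } n\geq 1.$$

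Finally, set $\lambda:=1/(2\|g\|_\infty)$ and $f:=1/2+\lambda g$. Then $f$ takes values in $[0,1]$, has mean $1/2$, and
$$\int f\cdot T^n f\, d\mu - \Big(\int f\, d\mu\Big)^2 = \lambda^2 \int g\cdot T^n g\, d\mu = \lambda^2\epsilon_n b_n,$$
which is strictly positive on $S_+$ and strictly negative on $S_-$, as required. The main obstacle is the upgrade from $L^2$ to $L^\infty$: Theorem~\ref{T:F1}(iv) already produces an $L^2$ function with the desired correlations, but the values-in-$[0,1]$ conclusion demands a bounded representative. The hypothesis that the Lebesgue component is realised by an $L^\infty$ function $h$, combined with the Wiener--L\'evy step that keeps $\psi$ in the Wiener algebra (so that the series defining $g$ converges in $L^\infty$), are exactly what is needed to bridge this gap.
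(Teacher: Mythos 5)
Your argument is correct in substance, but it takes a genuinely different route from the paper's. The paper argues perturbatively: it takes the bounded $g$ with $\{T^ng\}_{n\geq 0}$ orthonormal, forms $f:=g+\sum_{k\geq 1}a_kT^kg$ where $|a_n|$ is decreasing, $\sum_k|a_k|<1$, and $a_n$ carries the prescribed sign, and then computes $\int f\cdot T^nf\, d\mu=a_n+\sum_{k\geq 1}a_ka_{k+n}$; the monotonicity of $|a_n|$ forces the cross term to be smaller in absolute value than $|a_n|$, so the sign of $a_n$ survives. You instead prescribe the spectral density exactly: you build $\phi\in A(\T)$ with $\widehat{\phi}(n)=\epsilon_nb_n$, bounded away from $0$, extract $\psi=\sqrt{\phi}\in A(\T)$ by Wiener--L\'evy, and transport it to the system via $g=\sum_k\widehat{\psi}(k)T^kh$, obtaining correlations equal to $\lambda^2\epsilon_nb_n$ with no error term. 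Your version proves something stronger --- it is in effect a bounded-function upgrade of part (iv) of Theorem~\ref{T:F1}, giving exactly prescribed correlations rather than merely correct signs --- at the price of invoking Wiener--L\'evy and the functional calculus where the paper needs only orthonormality and the triangle inequality.

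One caveat: the series $\sum_{k\in\Z}\widehat{\psi}(k)T^kh$ uses negative powers of $T$, while the paper's notion of a system does not assume $T$ invertible (Proposition~\ref{T:Correspondence} is careful to \emph{output} an invertible system, suggesting the input need not be one), and since $\psi=\sqrt{\phi}$ is even its negative Fourier coefficients are genuinely present. For the intended applications (e.g.\ two-sided Bernoulli systems) this is harmless, and it is repairable in general --- for instance via a one-sided Wiener--Hopf factorization $\phi=\theta\overline{\theta}$ with $\theta$ in the analytic Wiener algebra, though $\theta$ is then complex and the correlation identity must be reworked --- but as written your construction requires $T$ invertible. The paper's proof sidesteps the issue entirely by using only forward iterates $T^kg$, $k\geq 1$.
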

\begin{proof}
First note that it suffices to find  $f\in L^\infty(\mu)$ that satisfies Properties~\eqref{E:+} and \eqref{E:-} without imposing any other restriction on its range. Indeed, then $\tilde{f}:=\frac{1}{{2\norm{f}_\infty}}(\norm{f}_\infty+f)$ still satisfies Properties~\eqref{E:+} and \eqref{E:-}  and takes values in $[0,1]$.

Our assumptions imply that there exists a function  $g\in L^\infty(\mu)$ such that
$$
\int g\cdot T^ng\, d\mu=0 \ \text{  for every } n\in \N \ \text{ and }\ \norm{g}_{L^2(\mu)}=1.
$$
Note that then  $\int g\, d\mu=0$; indeed,   the ergodic theorem implies that
$$
\Big(\int g\, d\mu\Big)^2\leq \lim_{N\to\infty}\frac{1}{N}\sum_{n=1}^N\int g\cdot T^ng\, d\mu=0.
$$

Now let $(a_n)_{n\in \N}$ be a sequence of real numbers such that
\begin{enumerate}[(i)]
\item
$(|a_n|)_{n\in \N}$ is decreasing;

 \item
  $a_k>0$ for $k\in S_+$ and $a_k<0$ for $k\in S_-$;

 \item $\sum_{k=1}^\infty|a_k| <1$.
 \end{enumerate}
 Consider the  function
$$
f:=g+\sum_{k=1}^\infty a_k T^kg.
$$
Then $f\in L^\infty(\mu)$ and $\int f\, d\mu=0$. We set  $a_0:=1$.
Then
$$
\int f\cdot T^nf\, d\mu=\sum_{k,l\geq 0} a_k\cdot a_{l} \int T^lg\cdot T^{k+n}g\, d\mu \ \ \text{ for every } \ n\in \N.
$$
Since, by assumption,
 $\{g, Tg, T^2g,\ldots\}$ is an orthonormal set, we deduce that
$$
\int f\cdot T^nf\, d\mu=a_n+\sum_{k=1}^\infty a_k\cdot a_{k+n} \ \ \text{ for every } \ n\in \N.
$$
Hence, using Properties $\text{(i)}$-$\text{(iii)}$ we get for $n\in S_+$ that
$$
\int f\cdot T^nf\, d\mu\geq a_n-\sum_{k=1}^\infty |a_k|\cdot |a_{k+n}|\geq
a_n-a_n\sum_{k=1}^\infty |a_k|>0;
$$
and for $n\in S_-$ that
$$
\int f\cdot T^nf\, d\mu\leq a_n+\sum_{k=1}^\infty |a_k|\cdot |a_{k+n}|\leq
a_n-a_n\sum_{k=1}^\infty |a_k|<0.
$$
This completes the proof.
\end{proof}

\section{Under and over recurrent sets}

\subsection{From functions to sets}
We will use the following ``correspondence principle'' in order to translate statements about correlation sequences
of functions with
values on the interval $[0,1]$ to statements about correlation sequences of sets.
\begin{proposition}\label{T:Correspondence}
Let $(X,\X,\mu,T)$ be a system and $f\in L^\infty(\mu)$ be a function that takes values in $[0,1]$. Then there
exist an invertible  system $(Y,\Y,\nu, S)$ and a set $A\in \Y$ such that
$$
\nu(S^{-n_1}A\cap\cdots \cap S^{-n_\ell}A)=\int  T^{n_1}f \cdot \ldots\cdot T^{n_\ell}f \, d\mu
$$
holds for every $\ell\in \N$ and  all distinct non-negative integers $n_1,\ldots, n_\ell$.

 Moreover,  if the system $(X,\X,\mu,T)$ is ergodic, weak-mixing,  mixing, or multiple-mixing, then so is the system $(Y,\Y,\nu, S)$.
\end{proposition}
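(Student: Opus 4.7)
The plan is to realize $A$ as a randomly-thresholded event in a skew product of $X$ with an auxiliary Bernoulli factor supplying independent uniform noise; thresholding the noise against $f$ then produces, by Fubini, exactly the correlation sequence of $f$.

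Assume first that $T$ is invertible (otherwise pass to the natural invertible extension and lift $f$, which preserves all correlations and the ergodicity/mixing class of $(X,T)$). Let $(\Omega,\B(\Omega),\lambda,\sigma)$ denote the invertible Bernoulli shift with $\Omega:=[0,1]^{\Z}$, $\lambda$ the product Lebesgue measure, and $\sigma$ the left shift. Define
\[
(Y,\Y,\nu,S) := \bigl(X\times\Omega,\ \X\otimes\B(\Omega),\ \mu\times\lambda,\ T\times\sigma\bigr),
\]
which is automatically invertible and measure preserving, and set
\[
A := \{(x,u)\in Y : u_0 \leq f(x)\}.
\]

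Since $S$ shifts both coordinates, $S^{-n}A = \{(x,u) : u_n \leq f(T^n x)\}$. For distinct non-negative integers $n_1,\ldots,n_\ell$, the coordinates $u_{n_1},\ldots,u_{n_\ell}$ of $u$ are i.i.d.\ uniform on $[0,1]$ and independent of $x$, so Fubini gives
\[
\nu\bigl(S^{-n_1}A \cap\cdots\cap S^{-n_\ell}A\bigr) = \int_X \prod_{i=1}^\ell \lambda\bigl(\{u_{n_i}\leq f(T^{n_i}x)\}\bigr)\,d\mu(x) = \int T^{n_1}f \cdots T^{n_\ell}f\,d\mu,
\]
which is the required identity.

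For the transfer of ergodicity and mixing properties, the key point is that the auxiliary factor $(\Omega,\lambda,\sigma)$ is Bernoulli, hence simultaneously ergodic, weak mixing, mixing, and mixing of all orders. Standard facts about products then yield each of the required implications: the product of two mixing (respectively multiple mixing, weak mixing) systems is of the same type, and an ergodic system times a weak mixing system is ergodic. In each case one verifies the property first on the algebra of measurable rectangles, by splitting the relevant correlation as a product of two factor correlations, and then extends to arbitrary measurable sets by a density argument in $L^2(\nu)$.

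The main obstacle is bookkeeping rather than substance: one must carefully invoke the independence of the $u_{n_i}$ to obtain exactly the product of $f$-values (which is where the distinct-indices hypothesis is consumed), and arrange the reduction to invertible $T$ via the natural extension so that none of the $L^p$ correlations or ergodic-theoretic properties are disturbed.
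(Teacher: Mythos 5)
Your proof is correct, but it takes a genuinely different route from the paper. The paper builds the target system intrinsically: it defines a stationary measure on the two-sided shift space $\{0,1\}^{\Z}$ by prescribing the cylinder measures $\nu([\epsilon_0\cdots\epsilon_n]):=\int f_{\epsilon_0}\cdot Tf_{\epsilon_1}\cdots T^nf_{\epsilon_n}\,d\mu$ with $f_0=f$, $f_1=1-f$, checks Kolmogorov consistency, takes $A=\{x_0=1\}$, and verifies the mixing properties directly on cylinder sets. You instead form the skew product of (the natural extension of) $X$ with an i.i.d.\ uniform Bernoulli factor and threshold the noise against $f$; Fubini and the independence of the coordinates $u_{n_1},\ldots,u_{n_\ell}$ (this is exactly where the distinctness hypothesis enters, just as in the paper) yield the same correlation identity, and the mixing transfer reduces to standard facts about products with a Bernoulli system. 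The two constructions are closely related --- the factor of your system generated by $\{S^nA\}_{n\in\Z}$ is essentially the paper's symbolic system --- but they trade different conveniences: your version avoids verifying Kolmogorov consistency and makes the preservation of ergodicity, weak mixing, mixing, and multiple mixing an immediate consequence of product theorems, at the cost of an extra reduction to invertible $T$ via the natural extension (which you correctly flag and which does preserve all the listed properties and the correlations of the lifted function); the paper's version produces the invertible system in one step and needs no auxiliary randomness, at the cost of a hands-on verification of mixing on cylinders. Both arguments are complete and correct.
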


\begin{proof}
In the sequence space $Y:=\{0,1\}^\Z$ we
denote the cylinder  sets by
$$
[\epsilon_m \epsilon_{m+1}\ldots \epsilon_n]:=\{x=(x_m)_{m\in\Z}\colon x_{m}=\epsilon_m,x_{m+1}=\epsilon_{m+1}, \ldots, x_n=\epsilon_n\},
$$
where  $m,n\in \Z$, $m\leq n$, and $\epsilon_i\in \{0,1\}$ for  $i\in \Z$. We let
$$
f_0:=f, \quad f_1:=1-f,
$$
and define the measure $\nu$ on cylinder sets by
\begin{equation}\label{E:defnu}
\nu([\epsilon_0 \epsilon_1 \ldots  \epsilon_n]):=\int f_{\epsilon_0}\cdot Tf_{\epsilon_1}\cdot \ldots\cdot
T^nf_{\epsilon_n}\, d\mu.
\end{equation}
We extend $\nu$  to all cylinder sets in a stationary way.
The consistency conditions of Kolmogorov's extension
theorem are satisfied, thus $\nu$ extends to a stationary measure on the Borel $\sigma$-algebra of the sequence space $Y$.
If
$$
A:=\{x\in Y\colon x_0=1\},
$$
and $S$ is the shift transformation on $Y$, then
$$
\nu( S^{-n_1}A\cap\cdots \cap S^{-n_\ell}A)=\int  T^{n_1}f \cdot \ldots\cdot T^{n_\ell}f \, d\mu
$$
holds for every $\ell\in \N$ and  all distinct non-negative integers $n_1,\ldots, n_\ell$.

Finally, suppose that the system $(X,\X,\mu,T)$ is mixing (ergodicity, weak-mixing, and multiple-mixing can be treated similarly). Let
$$
A:=[\epsilon_0 \epsilon_{1} \ldots  \epsilon_k], \quad B:=[\tilde{\epsilon}_{0} \tilde{\epsilon}_{1} \ldots  \tilde{\epsilon}_{l}]
$$
be two cylinder sets, and let
$$
g:=f_{\epsilon_0}\cdot Tf_{\epsilon_1} \cdot \ldots\cdot T^kf_{\epsilon_k}, \quad
h:=f_{\tilde{\epsilon}_0}\cdot Tf_{\tilde{\epsilon}_1} \cdot \ldots\cdot T^{l}f_{\tilde{\epsilon}_{l}}.
$$
Then for $n>\max\{k,l\}$, by the defining property of $\nu$ (see \eqref{E:defnu}),   we have
$$
\nu(A\cap S^{-n}B)=\int g\cdot T^nh\, d\mu\to \int g\, d\mu\cdot \int h\, d\mu=\nu(A)\cdot \nu(B).
$$
By stationarity, we get a similar statement for any two cylinder sets, and by density for all Borel subsets of $Y$. This proves  the asserted mixing property for the system $(Y,\Y,\nu,S)$.
\end{proof}

\subsection{Proof of Theorems~\ref{T:S1} and  \ref{T:S2}}\label{SS:Proof} We are now ready to prove results about
 under and  over recurrent  sets.
\begin{proof}[Proof of Theorem~\ref{T:S1}]
Let $(X,\X,\mu,T)$ be a mixing system with a Lebesgue component defined by a bounded function (for example a Bernoulli system). Then by Theorem~\ref{T:F2} there exists a strictly under-recurrent function $f$ with values in $[0,1]$. By Proposition~\ref{T:Correspondence} there exist a mixing system $(Y,\Y,\nu,S)$ and a set $A\in \Y$ with $\nu(A)=\int f\, d\mu$ (then $0<\nu(A)<1$ since $f$ is non-constant), and such that for every $n\in \N$ we have
$$
\nu(A\cap S^{-n}A)=\int f\cdot T^nf\, d\mu<\Big(\int f\, d\mu\Big)^2=\nu(A)^2.
$$
Hence, the set $A$ is strictly under-recurrent.

A similar argument proves the existence of a strictly over-recurrent set $B$ in some other mixing system $(Y,\Y,\nu,S)$.

To get a single system with a strictly under-recurrent and a strictly over-recurrent set, we consider the direct product $(X\times Y,\X\times \Y,\mu\times \nu,T\times S)$ of the two systems. This  system is still mixing, the set $A\times Y$ is strictly under-recurrent, and the set $X\times B$ is strictly over-recurrent.
\end{proof}
To prove  the  statement in the remark following Theorem~\ref{T:S1} we repeat the previous argument replacing  Theorem~\ref{T:F2} with Proposition~\ref{P:+-}.

\begin{proof}[Proof of Theorem~\ref{T:S2}]
It is known that there exist mixing systems with singular maximal spectral type (see the remark after Theorem~\ref{T:S2}).
Then by Theorem~\ref{T:F1} any such  system does not have under-recurrent functions, and as a consequence,  does not have under-recurrent sets.
\end{proof}
The remark after Theorem~\ref{T:S2} follows in a similar fashion since if $f\in L^2(\mu)$ satisfies $\int f\cdot T^nf\, d\mu\leq (\int f\, d\mu)^2$ for all large enough $n\in\N$, then the argument used in the proof of Corollary~\ref{C:Leb} shows that the spectral measure of $f$ is absolutely continuous with respect to the Lebesgue measure on $\T$.

\subsection{Proof of Theorem~\ref{T:S4}} Recall that for $f\in L^2(\mu)$ and $n=0,1,2,\ldots$ we define
$d_f(n):= \int f \cdot T^n f \, d\mu -\big(\int f\, d\mu\big)^2$. The proof of Theorem~\ref{T:S4}
is based on the following result:
\begin{proposition}\label{P:01}
Let $\sigma$ be a symmetric probability measure on $\T$. Then there exist a system $(X,\B,\mu,T)$ and
a function $f\in L^\infty(\mu)$ that takes values in $[0,1]$ and   such that
$$
d_f(n)=\frac{1}{8}\cdot \widehat{\sigma}(n) \ \ \text{ for  }  n=0,1,2\ldots.
$$
\end{proposition}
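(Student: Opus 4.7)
The plan is to build a concrete skew product over $\T$ driven by $\sigma$, and to use a simple trigonometric function as $f$ so that the correlation calculation reduces to integrating $\cos(2\pi nt)$ against $\sigma$.

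Concretely, I would take $X=\T\times\T$, $\mathcal{B}$ the Borel $\sigma$-algebra, $\mu=\sigma\times m_\T$ (where $m_\T$ is Lebesgue on $\T$), and
$$T(t,s)=(t,\,s+t).$$
The first coordinate is fixed so $\sigma$ is preserved, and the fiber transformation is a rotation of $\T$ that preserves $m_\T$; hence $T$ preserves $\mu$. Note that $T^n(t,s)=(t,s+nt)$, so iterates of $T$ act very simply in the second coordinate.

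Next I would take the explicit bounded function
$$f(t,s)=\tfrac12+\tfrac12\cos(2\pi s),$$
which manifestly takes values in $[0,1]$. Clearly $\int f\,d\mu=\tfrac12$, since the $\cos(2\pi s)$-term integrates to zero in $s$. For the two-point correlation, I would apply the product-to-sum identity
$$\cos(2\pi s)\,\cos\bigl(2\pi(s+nt)\bigr)=\tfrac12\cos(2\pi nt)+\tfrac12\cos\bigl(2\pi(2s+nt)\bigr),$$
integrate in $s$ (the second term disappears), and then integrate in $\sigma$. Using that $\sigma$ is symmetric, so $\widehat{\sigma}(n)=\int\cos(2\pi nt)\,d\sigma(t)$, the computation yields
$$\int f\cdot T^n f\,d\mu=\tfrac14+\tfrac18\,\widehat{\sigma}(n),$$
whence $d_f(n)=\tfrac18\widehat{\sigma}(n)$ for all $n\geq 1$; the $n=0$ case follows from the same calculation (or directly from $\int f^2 d\mu = \tfrac14 + \tfrac18 = \tfrac38$ and $(\int f\,d\mu)^2=\tfrac14$), matching $\tfrac18\widehat{\sigma}(0)=\tfrac18$.

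There is no real obstacle here; the only points worth double-checking are that $T$ preserves the non-product-rotation-invariant measure $\sigma\times m_\T$ (it does, because $\sigma$ sits on the invariant coordinate and $m_\T$ is invariant under every fiber rotation), and that the symmetry of $\sigma$ is indeed needed to identify $\int\cos(2\pi nt)\,d\sigma$ with $\widehat{\sigma}(n)$ (it gives $\widehat\sigma(n)\in\R$, and the symmetry assumption of the proposition is exactly what guarantees this). The normalization constant $\tfrac18$ then comes out automatically from the factor $\tfrac12\cdot\tfrac12\cdot\tfrac12$ produced by the product-to-sum identity combined with the amplitude $\tfrac12$ of the cosine in $f$.
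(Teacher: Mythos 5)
Your proof is correct and is essentially identical to the paper's: the authors use the same skew product $T(x,y)=(x,y+x)$ on $\T^2$ with $\mu=\sigma\times m_\T$ and the same function $f=\tfrac12(1+\cos(2\pi y))$, arriving at $\int f\cdot T^nf\,d\mu=\tfrac14+\tfrac18\widehat{\sigma}(n)$ exactly as you do.
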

\begin{proof}
 Consider the system $(\T^2, \B_{\T^2}, \mu, T)$ where
$$
T(x,y):=(x,y+x) \pmod{1}
$$
and $\mu=\sigma\times m_\T$ ($m_\T$ is the Haar measure on $\T$). Note that    $T$  preserves the measure $
\mu$.
Let
$$
f(x,y):=\frac{1}{2}(1+\cos(2\pi y))=\frac{1}{2}\Big(1+\frac{e(y)+e(-y)}{2}\Big).
$$
Then $f$ takes values in $[0,1]$, $\int f\, d\mu=\frac{1}{2}$, and  for $n=0,1,2,\ldots$ we have
$$
\int f\cdot T^nf \, d\mu =\frac{1}{4}+ \frac{\widehat{\sigma}(n)+\widehat{\sigma}(-n)}{16}=
\Big(\int f\, d\mu\Big)^2+\frac{1}{8}\widehat{\sigma}(n).
$$
\end{proof}

\begin{proof}[Proof of Theorem~\ref{T:S4}]
Combining  Propositions~\ref{T:Correspondence} and \ref{P:01}, we get that for any given   symmetric probability measure $\sigma$ on $\T$, there exist a  system $(Y,\Y,\nu,S)$ and a set $A\in \Y$, with $\nu(A)=\int f\, d\mu$, and such that
$$
\nu(A\cap S^{-n}A)=\nu(A)^2+\frac{1}{8}\cdot \widehat{\sigma}(n) \ \ \text{ for  }  n=0,1,2\ldots.
$$
In order to  complete the proof, we choose the measure $\sigma$ appropriately, as
 in the proof of Corollary~\ref{C:F1}, taking this time into account that it is a probability measure.
\end{proof}

\section{Multiple under and over recurrence}
In this subsection, we prove Theorem~\ref{T:S3}. The next definition gives  a substitute for the notion of a Lebesgue component that is better suited for our multiple recurrence setup.
\begin{definition}
Let $(X,\X,\mu)$ be a probability space and $S$ be a subset of $\Z$. We say that  the sequence $(f_n)_{n\in S}$, of  real valued functions in $ L^\infty(\mu)$, is {\em an orthogonal sequence of order $\ell\in \N$}, if $\int f_{n_1}\cdot f_{n_2}\cdot \ldots \cdot f_{n_\ell}\, d\mu=0$
whenever among the indices  $n_1,\ldots, n_\ell\in S$ there is at least one index  not equal to each of the others.
\end{definition}
\begin{example}
Consider  a Bernoulli $(1/2,1/2)$-system on the sequence space $\{-1,1\}^\Z$. Then  the
sequence of functions $(T^kf)_{k\in \Z}$,  where $f(x)=x(0)$ and $T$ is the shift transformation (defined by $(Tx)(k)=x(k+1)$, $k\in \Z$), is an orthogonal sequence of order $\ell$ for every $\ell\in \N$.
\end{example}
\begin{proposition}\label{P:multiple}
Let $(X,\X,\mu,T)$ be a system and let $\ell\in \N$. Suppose that there exists  $g\in L^\infty(\mu)$ with values in $\{ -1, 1\}$ such that $(T^ng)_{n\geq 0}$ is an orthogonal sequence of order $2\ell$. Then there exists $f\in L^\infty(\mu)$ with values in $[0,1]$ such that for  $d=2,\ldots, \ell+1$ we have that
$$
\int  T^{n_1}f\cdot \ldots \cdot T^{n_d}f\, d\mu <\Big(\int f\, d\mu\Big)^d
$$
for all distinct  $n_1, \ldots, n_d\in \N$. Furthermore, a similar statement holds with the
strict inequality reversed.
\end{proposition}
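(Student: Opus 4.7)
The plan is to first reduce to constructing a bounded mean-zero function $f_0\in L^\infty(\mu)$ such that for every $d\in\{2,\dots,\ell+1\}$ and every tuple of distinct $n_1,\dots,n_d\in\N$ one has
$$\int T^{n_1}f_0\cdots T^{n_d}f_0\,d\mu<0,$$
and then to rescale to $[0,1]$. Given such $f_0$, set $f:=\frac{\|f_0\|_\infty+f_0}{2\|f_0\|_\infty}$, which takes values in $[0,1]$ and has mean $1/2$. Expanding $\int\prod_{j=1}^d(\|f_0\|_\infty+T^{n_j}f_0)\,d\mu$ and using $\int f_0\,d\mu=0$ to kill all singleton contributions, every remaining subset $S\subseteq\{1,\dots,d\}$ has $2\le|S|\le d\le\ell+1$ and thus contributes strictly negatively by hypothesis, yielding $\int T^{n_1}f\cdots T^{n_d}f\,d\mu<(1/2)^d=(\int f\,d\mu)^d$. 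The strictly over-recurrent variant follows from the parallel construction producing an $f_0$ with strictly positive correlations.

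For the construction of $f_0$, the key tool is the pointwise identity $g^2\equiv 1$ (since $g$ is $\{-1,1\}$-valued) together with the order-$2\ell$ orthogonality of $(T^ng)_{n\ge 0}$: writing $h_I:=\prod_{i\in I}T^ig$ for finite $I\subset\N$, the product $h_Ih_J$ collapses via $g^2=1$ to $h_{I\triangle J}$, so the family $\{h_I:|I|\le\ell\}$ is orthonormal in $L^2(\mu)$ with $\int h_I\,d\mu=0$ for $I\ne\emptyset$. The naive linear ansatz $f_0=\sum_k a_kT^kg$ used in Proposition~\ref{P:+-} is insufficient here: for odd $d\ge 3$, the integral of a product of an odd number of $g$'s at distinct shifts vanishes by order-$2\ell$ orthogonality, so the correlation $\int T^{n_1}f_0\cdots T^{n_d}f_0$ is identically zero on distinct tuples and cannot be made strictly negative.

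To circumvent this, I would enrich the ansatz with higher-degree monomials in $g$, taking
$$f_0=\sum_{m=1}^{\ell}\varepsilon_m\sum_{k=1}^\infty a_{m,k}\,h_{I_{m,k}},$$
where $|I_{m,k}|=m$, the supports $I_{m,k}\subset\N$ are spread out with rapidly growing gaps, the scalars $a_{m,k}$ decay geometrically in $k$ with a fixed sign, and the parameters satisfy a hierarchy $\varepsilon_1\gg\varepsilon_2\gg\dots\gg\varepsilon_\ell$. Expanding the $d$-fold correlation and using $(T^ug)^2=1$ to cancel repeated shifts, each term reduces to $\int h_U\,d\mu\in\{0,1\}$, where $U$ is the set of shifts appearing with odd multiplicity in the multiset union of the chosen shifted supports (valid whenever $|U|\le 2\ell$). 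The spacings of the $I_{m,k}$ and the hierarchy of $\varepsilon_m$ are designed so that, for each $d\in\{2,\dots,\ell+1\}$, a single intended configuration yields a dominant term of the desired sign.

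The main obstacle is the uniform sign control over all $d\in\{2,\dots,\ell+1\}$ and all distinct tuples $(n_1,\dots,n_d)\in\N^d$ simultaneously: adversarial choices of the $n_j$'s can create arithmetic coincidences among the shifted supports $I_{m,k}+n_j$ that produce spurious nonzero contributions, and one must ensure that the collapsed set $U$ remains within the range $|U|\le 2\ell$ where the orthogonality hypothesis on $g$ applies. Engineering the spacings (e.g.\ super-exponentially growing) and the scalar hierarchy so that the intended leading term dominates for every admissible tuple and every $d$ simultaneously is the technical heart of the argument.
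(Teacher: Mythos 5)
Your reduction step contains a requirement that is not only unproved but actually unachievable for $\ell\geq 3$. To conclude from the binomial expansion of $\int\prod_{j=1}^d\bigl(\norm{f_0}_\infty+T^{n_j}f_0\bigr)\,d\mu$ you need \emph{every} subset term $\int\prod_{j\in S}T^{n_j}f_0\,d\mu$ with $|S|\geq 2$ to be strictly negative, i.e.\ you need all $d'$-fold correlations of $f_0$ at distinct times to be negative simultaneously for $2\leq d'\leq \ell+1$. This is impossible once $d'=4$ is in range: fix distinct $n_1,n_2$ and set $u:=T^{n_1}f_0\cdot T^{n_2}f_0$. Since $\int u\,d\mu<0$, the projection $Pu$ of $u$ onto the invariant functions satisfies $\norm{Pu}_{L^2(\mu)}^2\geq\bigl(\int u\,d\mu\bigr)^2>0$, and von Neumann's ergodic theorem gives $\limsup_{M\to\infty}\int u\cdot T^Mu\,d\mu\geq\norm{Pu}_{L^2(\mu)}^2>0$. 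Picking such an $M$ with $M\notin\{0,n_1-n_2,n_2-n_1\}$ produces four distinct times $n_1,n_2,n_1+M,n_2+M$ whose $4$-fold correlation is strictly positive. (This is exactly the Khintchine-type obstruction the whole paper is about.) So the target you set for $f_0$ cannot be met, and in any case you do not construct $f_0$: the final paragraph of your proposal explicitly defers ``the technical heart of the argument.''

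The paper's proof avoids this trap by \emph{not} asking the higher correlations of the perturbation to be negative --- only small. It takes $h:=g\cdot\sum_{k\in\Z}a_kT^kg$ with $a_0=0$, $a_{-n}=-a_n$, $0<a_n<\frac{1}{2^{d+1}\ell!}$ and $\sum_n a_n\leq\frac12$, and sets $f:=\frac{1+h}{2}$. The antisymmetry of $(a_k)$ forces every pair term to be strictly negative, $\int T^{n_i}h\cdot T^{n_j}h\,d\mu=-a_{n_i-n_j}^2$, while the order-$2d$ orthogonality expresses each term $\int T^{m_1}h\cdots T^{m_{d'}}h\,d\mu$ with $d'\geq 3$ as a sum over fixed-point-free permutations of products of at least three $a$'s; the smallness of the $a_k$ then bounds the total contribution of all such terms by half of $\alpha^2:=\max_{i\neq j}a_{n_i-n_j}^2$, so the negative pair terms dominate. (Indeed, some of those higher terms are genuinely positive --- e.g.\ the involution contributions $a_{n_1-n_2}^2a_{n_3-n_4}^2$ for $d'=4$ --- which is consistent with the obstruction above.) Your observation that the linear ansatz $\sum_ka_kT^kg$ fails for odd $d$ is correct and is precisely why the paper passes to the quadratic expression $g\cdot\sum_ka_kT^kg$, but the domination argument, not sign control of every correlation, is the missing idea.
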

\begin{proof}
Let $\ell\in \N$ and $g$ satisfies the asserted hypothesis. We can assume that $\norm{g}_{L^\infty(\mu)}\leq 1$.
Let $(a_k)_{k\in \Z}$ be a sequence of real numbers such that
\begin{enumerate}[(i)]
\item
$a_0=0$ and $0<a_n=-a_{-n}<\frac{1}{2^{d+1}\ell!}$ for every $n\in \N$;

 \item
 $\sum_{n=1}^\infty a_n\leq \frac{1}{2}$.
 \end{enumerate}
 Consider the  function
$$
h:=g\cdot \sum_{k\in \Z}^\infty a_k T^kg.
$$
 Then $\norm{h}_\infty\leq \sum_{k\in \Z}|a_k|=2\sum_{k=1}^\infty a_k\leq 1$.
  Note that
$$
\int h \, d\mu=\sum_{k\in \Z}a_k \int g\cdot T^kg\, d\mu=0,
$$
 where the last equality follows from  the order $2$ orthogonality of the sequence $(T^ng)_{n\geq 0}$ and the fact that $a_0=0$.
Let
$$
f:=\frac{1+h}{2}.
$$
Then $f$ takes values in $[0,1]$ and   $\int f\, d\mu=\frac{1}{2}$.
We claim that $f$ satisfies the asserted under-recurrence property. For reader's convenience, we first explain how the argument works for $d=2,3$; the general case is similar but the notation is more cumbersome.
\medskip

\noindent {\bf Proof for $d=2,3$.}
  A simple computation that uses the order $4$ orthogonality of the sequence $(T^ng)_{n\geq 0}$, that $g^2=1$, and the properties of the sequence $(a_k)_{k\in \Z}$,   shows that
$$
\int T^{n_1}h\cdot T^{n_2}h\, d\mu=a_{n_1-n_2}a_{n_2-n_1}=-a_{n_1-n_2}^2
$$
for all distinct $n_1,n_2\in \N$.
 We deduce  that for distinct $n_1,n_2\in \N$ we have
$$
4\int T^{n_1}f\cdot T^{n_2}f\, d\mu=1+2\int h \, d\mu + \int T^{n_1}h\cdot T^{n_2}h\, d\mu
=1-a_{n_1-n_2}^2<1,
$$
where we used that $\int h\, d\mu=0$. Hence, for all distinct $n_1,n_2\in \N$ we have
$$
\int T^{n_1}f\cdot T^{n_2}f\, d\mu<\frac{1}{4}=\Big(\int f\, d\mu\Big)^2.
$$

A similar computation, this time using the order $6$ orthogonality of the sequence $(T^ng)_{n\geq 0}$, shows that
$$
\int T^{n_1}h\cdot T^{n_2}h\cdot T^{n_3}h\, d\mu=a_{n_1-n_2} a_{n_2-n_3} a_{n_3-n_1}+
a_{n_1-n_3} a_{n_2-n_1} a_{n_3-n_2}=0
$$
for all distinct $n_1, n_2, n_3\in \N$.
Furthermore, for distinct $n_1,n_2, n_3\in \N$ we have
\begin{multline*}
8\int T^{n_1}f\cdot T^{n_2}f\cdot T^{n_3}f\, d\mu=1+3\int h \, d\mu +
\int T^{n_1}h\cdot T^{n_2}h\, d\mu+\\ \int T^{n_1}h\cdot T^{n_3}h\, d\mu+ \int T^{n_2}h\cdot T^{n_3}h\, d\mu
+\int T^{n_1}h\cdot T^{n_2}h\cdot T^{n_3}h\, d\mu
\end{multline*}
which is equal to
$$
1-a_{n_1-n_2}^2-a_{n_1-n_3}^2-a_{n_2-n_3}^2<1.
$$
Hence, for all distinct $n_1, n_2, n_3\in \N$ we have
$$
\int T^{n_1}f\cdot T^{n_2}f\cdot T^{n_3}f\, d\mu<\frac{1}{8}=\Big(\int f\, d\mu\Big)^3.
$$
\smallskip

\noindent {\bf Proof for  $d\geq 4$.} First note that since $\int h\, d\mu=0$, we  have that
  \begin{equation}\label{E:d}
2^d \int T^{n_1}f\cdot \ldots\cdot T^{n_d}f\, d\mu=1 +A+B,
\end{equation}
where
\begin{equation}\label{E:0}
A:=\sum_{1\leq i< j\leq d}\int T^{n_i}h\cdot  T^{n_j}h\, d\mu=
-\sum_{1\leq i< j\leq d}a_{n_i-n_j}^2
\end{equation}
and
\begin{equation}\label{E:1'}
B:=\text{ sum of at most $2^d$ terms of the form } \int T^{m_1}h\cdot \ldots\cdot T^{m_{d'}}h\, d\mu
\end{equation}
 where
$d'\in \{3,\ldots,d\}$ and $m_1,\ldots, m_{d'}\in \{n_1,\ldots, n_d\}$ are distinct integers.
Let
\begin{equation}\label{E:2'}
\alpha:=\max_{1\leq i\neq j\leq d}\{|a_{n_i-n_j}|\}.
\end{equation}
%%We deduce that
%%$$
%%|B|\leq 2^d\max_{1\leq i_1<\ldots<i_k\leq d}\Big|\int T^{n_{i_1}}h\cdot \ldots \cdot T^{n_{i_k}}h\, d\mu\Big|.
%%$$
Generalizing the computation done  in the case $d=2,3$, this time using the order $2d$ orthogonality of the sequence $(T^ng)_{n\geq 0}$, we get for every $d\geq 2$
and distinct $n_1, \ldots, n_d\in \N$ that
\begin{equation}\label{E:3'}
\int T^{n_1}h\cdot \ldots\cdot T^{n_d}h\, d\mu=\sum_{\pi \in \Sigma[d]} a_{n_1-\pi(n_1)}
\cdot \ldots\cdot  a_{n_d-\pi(n_d)}
\end{equation}
where $\Sigma[d]$ denotes the set of all permutations of the set $\{1,\ldots, d\}$ that have no fixed points.
Combining \eqref{E:1'}, \eqref{E:2'}, \eqref{E:3'}, and using that $|a_n|\leq \frac{1}{2^{d+1}d!}$ for all $n\in \N$,
we get that
$$
|B|\leq 2^d d!\,   \alpha^3  \leq 2^d d!\  \frac{1}{2^{d+1}d!}\, \alpha^2= \frac{\alpha^2}{2}.
$$
Combining this with \eqref{E:0}, we deduce that
$$
1+A+B\leq 1-\alpha^2+\frac{\alpha^2}{2}<1.
$$
Hence,  \eqref{E:d} gives that
$$
\int T^{n_1}f\cdot \ldots\cdot T^{n_d}f\, d\mu<\frac{1}{2^d}=\Big(\int f\, d\mu\Big)^{d}
$$
for all distinct $n_1,\ldots, n_d\in \N$, as required.

A similar (and simpler) argument proves the asserted over-recurrence property. The only change needed is in the definition of the sequence $(a_k)_{k\in \Z}$ we impose that $a_{-n}=a_n$ for every $n\in\N$.
\end{proof}

\begin{proof}[Proof of Theorem~\ref{T:S3}]
Consider a multiple mixing system that satisfies the assumptions of Proposition~\ref{P:multiple} (for example, any
 Bernoulli system). Using Proposition~\ref{T:Correspondence} we get a multiple mixing system and a set satisfying the asserted properties.
\end{proof}

\section{Combinatorial consequences}
In this short section, we  deduce Theorems~\ref{T:c1} and \ref{T:c2}
from their ergodic counterparts.
\begin{proof}[Proof of Theorem~\ref{T:c1}]
Let $(X,\mathcal{X},\mu,T)$ be the mixing system and let $A$ be the set given by the remark following Theorem~\ref{T:S1}. The ergodic theorem guaranties that for some $x_0\in X$ and for every non-negative integer $n$
we have
$$
\lim_{N\to\infty} \frac{1}{N}\sum_{k=1}^Nf_n(T^kx_0)=\int f_n\, d\mu
$$
where $f_n:={\bf 1}_{A\cap  T^{-n}A}$. Let $E:=\{m\in \N\colon T^mx_0\in A\}$.
Then
$$
d(E)=\mu(A) \ \text{ and } \  d(E\cap (E-n))=\mu(A\cap T^{-n}A)\  \text{ for every }\  n\in \N.
 $$
 Hence,
 $$
 d(E\cap (E-n))=\mu(A\cap T^{-n}A)>\mu(A)^2=d(E)^2 \ \text{ for every } \ n\in S_+,
 $$
 and, similarly,
 $$
 d(E\cap (E-n))=\mu(A\cap T^{-n}A)<\mu(A)^2=d(E)^2 \ \text{ for every } \ n\in S_-.
 $$
 Moreover, since the system is mixing, we have that
 $$
 d(E\cap (E-n))=\mu(A\cap T^{-n}A)\to \mu(A)^2=d(E)^2
 $$
 as $n\to \infty$.
 %% and the asserted properties of the set $E$ follow from the corresponding ones for the set $A$
 %% and the fact that the system is mixing.
\end{proof}
In a similar fashion, we deduce Theorem~\ref{T:c2}  from Theorem~\ref{T:S3}. We include the details for readers convenience.
\begin{proof}[Proof of Theorem~\ref{T:c2}]
For $r\in \N$
let $(X,\mathcal{X},\mu,T)$ be the multiple  mixing system and $A$ be the set given by
Theorem~\ref{T:S3}.
The ergodic theorem guaranties that for some $x_0\in X$ and for all non-negative integers $n_1,\ldots, n_r$
we have
$$
\lim_{N\to\infty} \frac{1}{N}\sum_{k=1}^Nf_{n_1,\ldots, n_d}(T^kx_0)=\int f_{n_1,\ldots, n_r}\, d\mu
$$
where
$f_{n_1,\ldots, n_r}:={\bf 1}_{A\cap  T^{-n_1}A\cap\cdots\cap T^{-n_r}A}$. Let $E:=\{m\in \N\colon T^mx_0\in A\}$. One concludes the proof  of Property $(i)$
exactly as in the proof of Theorem~\ref{T:c1}.
Property $(ii)$ follows in a similar way using the fact that the system is assumed to be multiple mixing.

The existence of a set $E$ that satisfies Property $(i)$ with the strict inequality reversed and also satisfies  Property $(ii)$,
follows in a similar fashion  from Theorem~\ref{T:S3}.
\end{proof}

\section{Under and over recurrent sets in positive entropy systems}
In this section we give explicit constructions of  under and over  recurrent sets in Bernoulli systems and deduce that every system with positive entropy has  under and  over recurrent sets.
\begin{theorem}\label{T:posentropy}
 Every ergodic system with positive entropy has a strictly over-recurrent and a strictly under-recurrent set.
\end{theorem}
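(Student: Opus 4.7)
My plan splits the proof into two stages: first, reduce the general positive-entropy case to a Bernoulli shift via Sinai's factor theorem; second, carry out an explicit construction inside a Bernoulli shift.

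For the reduction stage, Sinai's factor theorem provides a Bernoulli shift $(Y,\Y,\nu,S)$ as a factor of any ergodic positive-entropy system $(X,\X,\mu,T)$, via some $\pi\colon X\to Y$. A strictly under-recurrent set $B\subset Y$ lifts to the strictly under-recurrent set $A:=\pi^{-1}(B)\subset X$, since
\[
\mu(A\cap T^{-n}A)=\nu(B\cap S^{-n}B)<\nu(B)^2=\mu(A)^2\qquad\text{for every }n\in\N,
\]
and the over-recurrent case is identical. It therefore suffices to exhibit strictly under- and over-recurrent sets in some Bernoulli shift.

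For the Bernoulli construction, I work in $(\{-1,+1\}^\Z,\mu_{1/2},T)$ with $g(x):=x_0$, so that $\{T^kg\}_{k\in\Z}$ is an i.i.d.\ $\pm1$-valued family, in particular an orthogonal sequence of every order $\ell\in\N$. Applying the construction from the proof of Proposition~\ref{P:multiple} with $a_0=0$, $a_{-n}=-a_n$, $a_n>0$ strictly decreasing, and $\sum_n|a_n|$ small enough, the function $h:=g\cdot\sum_k a_k T^kg$ is bounded, mean-zero, and satisfies $\int h\cdot T^nh\,d\mu=-a_n^2<0$ for every $n\in\N$. Thus $f:=(1+h)/2$ is a bounded, $[0,1]$-valued, strictly under-recurrent function; the symmetric choice $a_{-n}=a_n$ yields a strictly over-recurrent counterpart.

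The main obstacle is promoting $f$ to a genuine set within a Bernoulli shift, since any set depending on only finitely many coordinates necessarily satisfies $\mu(A\cap T^{-n}A)=\mu(A)^2$ for $|n|$ beyond its window of dependence---hence the desired set must exhibit genuinely infinite-range dependence. My approach is to introduce an independent uniform-$[0,1]$ random threshold $U$ coded by a block of Bernoulli coordinates disjoint from and shift-independent of those supporting $h$, and to set $A:=\{U\leq f\}$. Then $\mu(A)=\int f\,d\mu$, and conditioning on the $h$-coordinates together with the independence of $(U,U\circ T^n)$ from $(f,f\circ T^n)$ yields $\mu(A\cap T^{-n}A)=\int f\cdot T^nf\,d\mu<\mu(A)^2$ for every $n\in\N$, as required. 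Realizing such a ``fresh randomization" $U$ by coordinates that are simultaneously disjoint from $h$'s coordinates and independent of all their own shifts requires either working inside a Bernoulli extension of suitably large entropy (which remains a Bernoulli factor of the original system after another application of Sinai's theorem, as long as the entropy bookkeeping cooperates) or a delicate direct coordinate partitioning inside a fixed Bernoulli shift; I expect this step to be the most technical point of the proof.
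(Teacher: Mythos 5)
Your first stage (Sinai's factor theorem) is exactly the paper's opening move, but you have left the two genuinely hard points unresolved, and the second one is a real obstruction rather than a technicality. First, the entropy bookkeeping: Sinai only yields Bernoulli factors of entropy strictly less than $h$, and $h$ may be arbitrarily small, so you must produce under- and over-recurrent sets in Bernoulli shifts of \emph{arbitrarily small} entropy. Your base shift $(\{-1,+1\}^{\Z},\mu_{1/2},T)$ already has entropy $\log 2$, and adjoining randomization coordinates only increases it; the paper instead works in a three-symbol Bernoulli shift whose parameters are tuned at the end precisely to drive the entropy to $0$.

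Second, and more seriously, the function-to-set step does not close. Your thresholding $A:=\{U\le f\}$ requires $U$ and $U\circ T^{n}$ to be independent of each other \emph{for every} $n\ge 1$ and independent of the coordinates supporting $f$ and $T^{n}f$. Inside a finite-state Bernoulli shift this forces $U$ to depend on a single coordinate, hence to take finitely many values; then $\mu(A\cap T^{-n}A)=\int \phi_{N}(f)\cdot T^{n}\phi_{N}(f)\,d\mu$ where $\phi_{N}(t)=\lfloor Nt\rfloor/N$, i.e.\ you have silently replaced $f$ by $\phi_{N}\circ f$. Since $f=(1+h)/2$ with $h=g\sum_{k}a_{k}T^{k}g$ takes uncountably many values, this replacement perturbs each correlation by an error of order $1/N$, while the quantities you need to keep strictly negative are $-a_{n}^{2}/4\to 0$; so no fixed $N$ preserves strict under-recurrence for all $n$. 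A continuum-valued $U$ would fix this but forces infinite entropy, which is incompatible with the Sinai reduction for finite-entropy systems. (Note that your randomization is essentially Proposition~\ref{T:Correspondence} in disguise: it produces a set in a \emph{different} system, which you would then still have to realize as a factor of the given one.) The paper avoids all of this by constructing the sets explicitly inside $\{0,1,2\}^{\N}$ --- for over-recurrence, $A=\{x:\text{the first non-zero entry of }x\text{ is }1\}$, and for under-recurrence, $A=\{x:\text{the first two non-zero entries are }1\text{ then }2\}$ --- and computing $\mu(A\cap T^{-n}A)$ in closed form, obtaining $a^{2}+p_{0}^{n}(1-a)a$ in the first case and an expression that is $<a^{2}$ for all $n$ once $p_{0}<a$ in the second. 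If you want to salvage your route, you would need either an explicit set construction of this kind or a proof that the shift system produced by Proposition~\ref{T:Correspondence} is Bernoulli of controllably small entropy; as written, the argument has a gap at its central step.
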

\begin{proof}
Suppose that the system  $(X,\X,\mu, T)$ has entropy $h>0$.
It is known \cite{S64} that any Bernoulli shift with entropy smaller than $h$ is a factor of the system $(X,\X,\mu, T)$. Hence, it suffices to show that there exist  Bernoulli shifts with arbitrarily small entropy  that have strictly under and over recurrent sets.

Thus, henceforth, we work with  Bernoulli systems on the space $X:=\{0,1,2\}^\N$ and
  for $i=0,1,2$ we let $p_i:=\mu([i])\in (0,1)$, where  with $[x_1\cdots x_k]$ we denote the cylinder set consisting of those $x\in X$ whose first $k$ entries are $x_1,\ldots,x_k\in \{0,1,2\}$.

We first deal with over-recurrence.
Let
$$
A:=\big\{x\in X\colon  \text{the first non-zero entry of } x \text{ is } 1 \big\}.
$$
Then
$$
A=\bigcup_{n=1}^\infty A_n
$$
where
$$
A_n:=\big\{x\in X\colon \text{the first non-zero entry of } x \text{ is  } 1 \text{ and it is at place } n \big\}.
$$
Since
$
\mu(A_n)=p_0^{n-1}p_1$, we have
$$
\mu(A)=\sum_{n=1}^\infty p_0^{n-1}p_1=\frac{p_1}{1-p_0}=\frac{p_1}{p_1+p_2}=:a.
$$

Moreover, we have
$$
A\cap T^{-n}A=A'_n\cap T^{-n} A
$$
where
$$
A'_n:=(\bigcup_{k=1}^n[(0)_{k-1} 1])\cup [(0)_n].
$$
and $(0)_i$ is used to denote $i$-consecutive zero entries.  Since the set  $A'_n$  depends on the  first $n$ entries of  elements of $X$ only, we have
$$
\mu(A\cap T^{-n}A)=\mu(A'_n)\cdot \mu(T^{-n}A)=a\cdot \mu(A'_n)
$$
where
$$
\mu(A'_n)=p_0^n+\sum_{k=1}^np_0^kp_1=p_0^n+p_1\cdot\frac{1-p_0^n}{1-p_0}=p_0^n+a(1-p_0^n)=
a+p_0^n(1-a).
$$
It follows that
$$
\mu(A\cap T^{-n}A)= a^2+p_0^n(1-a)a>a^2=\mu(A)^2 \ \text{ for every } \ n\in \N.
$$
Hence, the set $A$ is strictly over-recurrent. Note also that by choosing $p_1$ sufficiently close to $1$ (then  $p_0, p_2$ will be close to $0$) we can make the Bernoulli shift  have arbitrarily small entropy.

Next we deal with under-recurrence. We let
$$
A:=\big\{x\in X\colon  \text{the first  two non-zero entries of } x \text{ are } 1 \text{ and }
 2  \text{ in this order}  \big\}.
$$
Then
$$
A=\bigcup_{k,l \geq 0} A_{k,l} \quad \text{ where} \quad   A_{k,l}:=[(0)_k1 (0)_l 2].
$$
Hence,
$$
\mu(A)=\sum_{k,l\geq 0} p_1 p_2 p_0^{k+l}=p_1p_2 \big(\sum_{k\geq 0} p_0^{k}\big)^2=\frac{p_1p_2}{(p_1+p_2)^2}=:a.
$$

Next we fix $n\in \N$ and compute the measure of the set $A\cap T^{-n}A$. We partition the set $A$ into three sets. The first, call it $A_{1}$, consists of  those $x\in A$ whose first two non-zero entries (which are $1$ and $2$)  occur at the first $n$ places. Then
\begin{multline*}
\mu(A_1\cap T^{-n}A)=a\cdot \sum_{0\leq k+l\leq n-2}\mu([(0)_k1(0)_l2])=
ap_1p_2\sum_{k=0}^{n-2}(k+1)p_0^{k}\\=a^2(1+np_0^n-np_0^{n-1}-p_0^n).
\end{multline*}
The second, call it $A_{2}$, consists of  those $x\in A$ whose first two non-zero entries  occur after the first $n$ places. Then
$$
\mu(A_{2}\cap T^{-n}A)=a\cdot \mu([0]_n)=ap_0^n.
$$
The third, call it $A_{3}$, consists of  those $x\in A$ whose first non-zero entry (which is $1$) occurs at the first $n$ places and the second (which is $2$)
occurs after the first $n$ places. Then clearly $A_3\cap T^{-n}A=\emptyset$, hence
$$
\mu(A_{3}\cap T^{-n}A)=0.
$$
Combining the above, we deduce that
\begin{equation}\label{E:identity}
\mu(A\cap T^{-n}A)=a^2(1+np_0^n-np_0^{n-1}-p_0^n)+ap_0^n, \quad n\in \N.
\end{equation}
Then
$$
\mu(A\cap T^{-n}A)<\mu(A)^2 =a^2\Longleftrightarrow n>\frac{p_0(1-a)}{a(1-p_0)}.
$$
So it remains to choose $p_0,p_1,p_2$ so that $p_0<a$;   then the last estimate
will be  satisfied for all $n\in \N$ and   the set  $A$ will be strictly under-recurrent.
We let $p_1=1-s$ and $p_2=ts$ with $s,t\in (0,1)$. Then  the estimate $p_0<a=\frac{p_1p_2}{(p_1+p_2)^2}$
leads to the equivalent estimate
$1-t<\frac{t (1-s)}{(1-s+ts)^2}$ which is satisfied, for example, if $t=\frac{3}{4}$ and  $s<\frac{1}{2}$.

Summarizing, taking $p_0=\frac{1}{4}s$, $p_1=1-s$,  $p_2=\frac{3}{4}s$, we have that for all $s<\frac{1}{2}$ the set $A$, defined  above, is strictly under-recurrent.
Taking $s$ close to $0$ we deduce  the existence  of Bernoulli shifts with arbitrarily small
entropy that have strictly under-recurrent sets. This finishes the proof.
\end{proof}

\section{Singular over-recurrent functions on  a mixing system}\label{SS:ex2}
In Theorem~\ref{T:S2} we showed that there exist  mixing systems with no under-recurrent
sets, and the key to our construction was that a function with singular  spectral measure cannot be under-recurrent. In this section we show that a similar approach cannot be used in order
to construct mixing systems with no over-recurrent functions. We  will show that there exists
a mixing system that has a strictly over-recurrent function with singular spectral measure.

First, we briefly review some basic facts regarding Riesz-products, their proofs can be found in
\cite[pages 5-7]{Qu10} and \cite{Br74, P73, Z32}. If $P_N(t)=\prod_{j=0}^{N-1}(1+a_jcos(3^jt))$, $N\in \N$,  where $(a_j)_{j\geq 0}$ are
%%non-zero
real numbers with $|a_j|\leq 1$,  then the sequence of probability measures $(\sigma_N)_{N\in \N}$, defined by $d\sigma_N:= P_N(t)\, dt$, $N\in \N$, converges $w^*$  to a symmetric probability measure
$\sigma$ on $[0,1]$ with Fourier coefficients $\widehat{\sigma}(0)=1$ and
\begin{equation}\label{E:1}
\widehat{\sigma}(n)=
\prod_{j} \big(\frac{a_j}{2}\big), \quad \text{if } n=\sum_{j=0}^k \epsilon_j 3^j,\quad  \epsilon_j=-1,0,1,
\end{equation}
where the product is taken over those $j\in \{0,\ldots, k\}$ for which $\epsilon_j\neq 0$.

The measure $\sigma$ is equivalent to the Lebesgue measure if  $\sum_{j=0}^\infty |a_j|^2<\infty$ and is continuous and  singular if  $\sum_{j=0}^\infty |a_j|^2=\infty$.

We also review some basic facts regarding Gaussian systems; their proofs can be found in \cite[pages 369-371]{CFS82} and \cite[pages 90-92]{G03}. If $\sigma$ is a symmetric probability measure on the circle, then there exist a Gaussian system $(X,\X,\mu,T)$ and a function $f\in L^2(\mu)$ $f$ is a real Gaussian variable, so it is not bounded)   with spectral measure $\sigma$, meaning,   it satisfies
\begin{equation}\label{E:2}
\int f\cdot T^nf\, d\mu=\widehat{\sigma}(n) \ \text{ for }\ n=0,1,2,\ldots.
\end{equation}
The Gaussian system
is mixing if and only if the measure $\sigma$ is  Rajchman, meaning, it satisfies $\widehat{\sigma}(n)\to 0$ as $n\to\infty$. Note that in this case we have  $\int f\, d\mu=0$.

   We proceed now to the  construction (shown to us by B.~Host). We take as $\sigma$  to be the $w^*$-limit  of the sequence of measures $(\sigma_N)_{N\in\N}$ defined by   $$
   d\sigma_N=\prod_{j=0}^{N}\big(1+\frac{\cos(3^jt)}{\sqrt{j+1}}\big)\, dt, \quad N\in \N.
    $$
    Since $\sum_{j=0}^\infty \big(\frac{1}{\sqrt{j+1}}\big)^2=\infty$, as remarked above,
    the measure
 $\sigma$ is singular. Moreover,  it  follows from \eqref{E:1} that  $\widehat{\sigma}(n)> 0$ for every $n\in \N$ and $\widehat{\sigma}(n)\to 0$ as $n\to \infty$.

  Next, we consider a Gaussian system and a function $f\in L^2(\mu)$ that satisfies \eqref{E:2}. As remarked above,  this system is mixing. Moreover,    the function $f$
  has singular spectral measure by construction, and satisfies
 $$
 \int f \cdot T^nf\, d\mu=\widehat{\sigma}(n)> 0\ \text{ for every } n\in \N.
$$
Hence, the function $f$ is strictly over-recurrent, as required.

\end{document}